\newcommand{\mathsym}[1]{{}}
\newcommand{\unicode}[1]{{}}
\newcommand{\rd}{\mathrm{d}}
\newtheorem{theorem}{Theorem}
\newtheorem{corollary}{Corollary}
\newtheorem{definition}{Definition}
\newtheorem{example}{Example}
\begin{document}

\title{Gauss maps of surfaces in 3-dimensional Heisenberg group}


\author{Christiam Figueroa \thanks{%
Principal professor of the Pontificia Universidad Cat\'{o}lica del Per\'{u}}}





\maketitle

\begin{abstract}
In this paper we study the Gauss map  of surfaces in 3-dimensional Heisenberg
group using the Gans model of the hyperbolic plane.  We establishes a relationship between the tension field of the Gauss map and  mean curvature
of a surface in $\mathcal{H}_{3}$.
\end{abstract}

\section{Introduction}

 It is well-known the classical Weierstrass representation formula describes
minimal surfaces in $\mathbb{R}^{3}$ in terms of their Gauss map. More generally, Kenmotsu \cite{kenmotsu1979weierstrass}
shows a representation formula for arbitrary surfaces in $\mathbb{R}
^{3}$ with non-vanishing mean curvature, which describes these surfaces in
terms of their Gauss map and mean curvature functions. Similar result have been obtained for minimal surface in the Heisenberg group, see \cite{daniel2011gauss} and \cite{figueroa2007gauss}.

Motivated by these results, we wish to investigate  a relationship between the tension field of the Gauss map and  mean curvature of a surface in $\mathcal{H}_{3}$.

We have organized the paper as follows. In section 2 we present the Gans model of the hyperbolic
plane where a geodesic is a line or a branch of a hyperbola.

Section 3 contains the basic Riemannian geometry of $\mathcal{H}_{3}$ equipped with a left-invariant metric.

In section 4 we review some of the standard facts on the nonparametric surface and vertical surface in $\mathcal{H}_{3}$. We
compute the coefficients of the first and second fundamental form of these surfaces.

Section 5 provides a detailed exposition of the Gauss map of  nonparametric and vertical surfaces. And establishes the
relation between the Gauss map of two  surfaces that  are related by an ambient isometry of $\mathcal{H}_{3}$.

In section 6 we describe the relationship  between  the tension field of the Gauss map and mean curvature  of a surface in $%
\mathcal{H}_{3}$. As a consequence, we describe all minimal surfaces in $\mathcal{H}_{3}$, such that
its Gauss map has rank 1. To end this section, we present a theorem that provides a characterization of a minimal surface such that its Gauss map is conformal.

Finally, we include an appendix with a Mathematica program that allows us to compute the tension field of the Gauss map.

\section{The Gans model}

This is a  model of the hyperbolic geometry, developed by David Gans, see \cite{gans1966new}. Unlike the other models,  utilizes  the entire plane. We shall present the basic concepts of this geometry, that is,  isometries group, the Riemannian connection and their geodesics.
Consider the Poincar\'{e} Disk
$$\mathbb{D}=\{(x,y):x^{2}+y^{2}<1\}$$
endowed with the metric
$$g(x,y)=\frac{4}{(1-x^{2}-y^{2})^{2}}(dx^{2}+dy^{2})$$

We  first define a diffeomorphism between  the Poincar\'{e} disk and the plane $\mathcal{P}:z=1$

Using the stereographic projection from the south pole $(0,0,-1)$ of the unite sphere, we have the following diffeomorphism  $\varphi$ between the upper hemisphere $S_{+}$ onto  the disk, $\mathbb{D}$,
$$\varphi(x,y,z)=(\frac{x}{z+1},\frac{y}{z+1}).
$$
In the same way, considering  the stereographic projection from the origin $(0,0,0)$ of the unite  sphere, we define a diffeomorphism $\psi$ of $S_{+}$, onto  the plane  $\mathcal{P}:z=1$,
\begin{equation}\label{psi}
  \psi(x,y,z)=(\frac{x}{z},\frac{y}{z},1).
\end{equation}
Then, $F(x,y)=\psi\circ\varphi^{-1}$ is a diffeomorphism  from the disk $\mathds{D}$ onto $\mathcal{P}$, where
\begin{equation}\label{diff}
  F(x,y)=(\frac{2x}{1-x^{2}-y^{2}},\frac{2y}{1-x^{2}-y^{2}},1)
\end{equation}
and the inverse is given by
$$F^{-1}(u,v,1)=(\frac{u}{1+\sqrt{1+u^{2}+v^{2}}},\frac{v}{1+\sqrt{1+u^{2}+v^{2}}})$$
Then the metric induced on  $\mathcal{P}$ by $F$ is given by
$$h(u,v)=\frac{(1+v^{2})du^{2}-2uvdudv+(1+u^{2})dv^{2}}{1+u^{2}+v^{2}}$$
The riemannian space $(\mathcal{P},h)$ is the Gans model of  the hyperbolic geometry.

\subsection{Isometries}

Consider the Poincar\'{e} disk  as the subset $\mathbb{D}=\{z\in  \mathbb{C}:|z|<1\}$
of the complex plane and the Gans model $\mathcal{P}=\{w:w\in \mathbb{C}\}$.
We know that the set of orientation-preserving isometries of the Poincar\'{e} Disk have the form,
$$\rho(z)=e^{i\theta}\frac{z-a}{1-\overline{a}z}, \qquad a\in\mathbb{D}.$$
And all  isometries of $\mathbb{D}$ are composed of $\rho$ with complex conjugation, that is reflection at the real axis.
Therefore, the isometry group of the Gans model is
$$Iso(\mathcal{P})=\{F\circ\rho\circ F^{-1}:\rho\in Iso(\mathbb{D})\},$$
where $F$ is as in $(\ref{diff})$. I shall highlight two cases:

If $\rho(z)=e^{i\theta}z$ , then  $F\circ\rho\circ F^{-1}(w)=e^{i\theta}w$, that is, a rotation about the origin $(0,0)$ is an isometry of the hyperbolic space $\mathcal{P}$.

On the other hand, if $\rho(z)=\overline{z}$, then  $F\circ\rho\circ F^{-1}(w)=\overline{w}$  is the
 reflection across the $u$ axis. Since rotation about the origin is an isometry, a reflection across the line $au+bv=0$  is an isometry too.

\subsection{Geodesics }

 The Christoffel symbols for the Gans Model are, (see Appendix \ref{appendix:a}):
\begin{equation*}
\begin{aligned}[t]
\text{$\Gamma^{1}_{11} $} &=& -\frac{u \left(v^2+1\right)}{u^2+v^2+1} \\
 \text{$\Gamma^{1}_{21} $} &=& \frac{u^2 v}{u^2+v^2+1} \\
 \text{$\Gamma^{1}_{22} $} &=& -\frac{u^3+u}{u^2+v^2+1} \\
\end{aligned}
\qquad
\qquad
\begin{aligned}[t]
\text{$\Gamma^{2}_{11} $} &=& -\frac{v^3+v}{u^2+v^2+1} \\
 \text{$\Gamma^{2}_{21} $} &=&\frac{u v^2}{u^2+v^2+1} \\
 \text{$\Gamma^{2}_{22} $} &=& -\frac{\left(u^2+1\right) v}{u^2+v^2+1} \\
\end{aligned}
\end{equation*}

Let  $\gamma(s)=(u(s),v(s))$ be a geodesic in this model such that $\|\gamma'(s)\|=1$, then the system of geodesics equations for $\gamma$ is given by
\begin{equation}
\left\{\begin{matrix}
  u'' -u&=& 0 \\
  v''-v &=& 0
\end{matrix}\right.
\end{equation}
Solving this system, we obtain that
$$u(t)=Ae^{t}+Be^{-t},\;\;\;\;\;\;\;\;v(t)=Ce^{t}+De^{-t},$$
with $A,B,C,D\in \mathbb{R}$. Varying these constants we get that the geodesics in the  Gans Model are  straight lines that pass through the origin and   a branch of a hyperbola with center at the origin $(0,0)$.

\section{The Geometry of the Heisenberg Group }
The 3-dimensional Heisenberg group $\mathcal{H}_{3}$ is a two-step nilpotent Lie group. It has the following standard representation
 in $GL_{3}(\mathbb{R})$
 $$\left[
      \begin{array}{ccc}
        1 & r & t \\
        0 & 1 & s \\
        0 & 0 & 1 \\
      \end{array}
    \right]
$$
with $r,s,t\in \mathbb{R}$.

In order to describe a left-invariant metric on $\mathcal{H}_{3}$, we note that the Lie algebra $\mathfrak{h}_{3}$ of $\mathcal{H}_{3}$ is given by the
matrices
$$A=\left[
      \begin{array}{ccc}
        0 & x & z \\
        0 & 0 & y \\
        0 & 0 & 0 \\
      \end{array}
    \right] $$
    with $x,y,z$ real. The exponential map $exp:\mathfrak{h}_{3}\rightarrow \mathcal{H}_{3}$ is a global diffeomorphism, and is given by
    $$exp(A)=I+A+\frac{A^{2}}{2}=\left[
                                   \begin{array}{ccc}
                                     1 & x & z+\frac{xy}{2} \\
                                     0 & 1 & y \\
                                     0 & 0 & 1 \\
                                   \end{array}
                                 \right].$$
    Using the exponential map as a global parametrization, with the identification of the Lie algebra $\mathfrak{h}_{3}$ with $\mathbb{R}^{3}$ given by
$$(x,y,z)\leftrightarrow \left[
      \begin{array}{ccc}
        0 & x & z \\
        0 & 0 & y \\
        0 & 0 & 0 \\
      \end{array}
    \right],$$
    the group structure of $\mathcal{H}_{3}$ is given by
\begin{equation}\label{pr}
(a,b,c)\ast (x,y,z)=(a+x,b+y,c+z+\frac{ay-bx}{2}).
\end{equation}
    From now on, modulo the identification given by $exp$, we consider $\mathcal{H}_{3}$ as $\mathbb{R}^{3}$ with the product given in (\ref{pr}). The Lie   algebra bracket, in terms of the canonical basis $\{e_{1},e_{2},e_{3}\}$ of $\mathbb{R}^{3}$, is given by
    $$[e_{1},e_{2}]= e_{3},  [e_{i},e_{3}]=0$$
    with $i=1,2,3.$ Now, using $\{e_{1},e_{2},e_{3}\}$ as the orthonormal frame at the identity, we have the following left-invariant metric $\rd s^{2}$ in $\mathcal{H}_{3},$
$$\rd s^{2}=\rd x^{2}+\rd y^{2}+(\frac{1}{2}y \rd x-\frac{1}{2}x\rd y+\rd z)^{2}.$$
And the basis of the orthonormal left-invariant vector fields is given by
    $$E_{1}=\frac{\partial}{\partial x}-\frac{y}{2}\frac{\partial}{\partial z},\;\;  E_{2}=\frac{\partial}{\partial x}+\frac{x}{2}\frac{\partial}{\partial z},\;\;
    E_{3}=\frac{\partial}{\partial z}.$$
Then the Riemann connection of $ds^{2}$, in terms of the basis $\{E_{i}\}$, is given by:
$$\begin{array}{ccccc}
  \nabla_{E_{1}}E_{2} & = & \frac{1}{2}E_{3} & = & -\nabla_{E_{2}}E_{1} \\
  \nabla_{E_{1}}E_{3} & = &  -\frac{1}{2}E_{2}& = & \nabla_{E_{3}}E_{1} \\
  \nabla_{E_{2}}E_{3} & = & \frac{1}{2}E_{1} & = &\nabla_{E_{3}}E_{2}
\end{array}
$$
and $\nabla_{E_{i}}E_{i}=0$ for $i=1,2,3$.

Using the fact that an isometry of $\mathcal{H}_{3}$ which fix the identity, is an automorphism of $\mathfrak{h}_{3}$, it is possible to show that evert isometry of $\mathcal{H}_{3}$ is of the form $L\circ A$ where  $L$ is a left translation in $\mathcal{H}_{3}$ and $A$  is in one of the following forms
$$\begin{bmatrix}
    \cos\theta & -\sin\theta & 0 \\
    \sin\theta & \cos\theta &0 \\
    0 & 0 & 1 \\

  \end{bmatrix}
\quad \textrm{or} \quad  \begin{bmatrix}
     \cos\theta & \sin\theta & 0 \\
    \sin\theta & -\cos\theta &0 \\
    0 & 0 & -1 \\
   \end{bmatrix}
   $$

 That is, $A$ represent a rotation around the $z$-axis or a composition of the reflection across the plane $z=0$ and a reflection across a line $y=mx$ for some $m\in \mathbb{R}$ .

\section{Surfaces in $\mathcal{H}_{3}$}

We will consider two types of surfaces in $\mathcal{H}_{3}$, and calculate the first and second fundamental form in each case.
\subsection{Graph over the xy-plane}

Let $S$ be a graph of a smooth function $f:\Omega \rightarrow \mathbb{R}$ where $\Omega $ is an open set of $\mathbb{R}^{2}$. We consider the
following parametrization of $S,$
\begin{equation}\label{paramet}
 X\left( x,y\right) =( x,y,f( x,y)),\;\;(x,y)\in \Omega.
\end{equation}
A basis of the tangent space $T_{p}S$ associated to this
parametrization is given by
\begin{equation}
\begin{array}{ccccc}
X_{x} & = & \left( 1,0,f_{x}\right) & = & E_{1}+\left( f_{x}+\frac{y}{2}%
\right) E_{3} \\
X_{y} & = & \left( 0,1,f_{y}\right) & = & E_{2}+\left( f_{y}-\frac{x}{2}%
\right) E_{3},%
\end{array}
\label{basis}
\end{equation}%
\noindent
and its unit normal vector is given by
\begin{equation}
\eta \left( x,y\right) =-\left( \frac{f_{x}+\frac{y}{2}}{w}\right)
E_{1}-\left( \frac{f_{y}-\frac{x}{2}}{w}\right) E_{2}+\frac{1}{w}E_{3}
\label{normal}
\end{equation}%
where
\begin{equation}
w=\sqrt{1+\left( f_{x}+\frac{y}{2}\right) ^{2}+\left( f_{y}-\frac{x}{2}\right) ^{2}}.
\label{w}
\end{equation}
Then the
coefficients of the first fundamental form of $S$  are given by%
\begin{equation}
\begin{array}{ccccl}
E & = & <X_{x},X_{x}> & = & 1+\left( f_{x}+\frac{y}{2}\right) ^{2} \\
F & = & <X_{y},X_{x}> & = & \left( f_{x}+\frac{y}{2}\right) \left( f_{y}-%
\frac{x}{2}\right) \\
G & = & <X_{y},X_{y}> & = & 1+\left( f_{y}-\frac{x}{2}\right) ^{2}.%
\end{array}
\label{1ffund}
\end{equation}%
If $\nabla $ is the Riemannian connection of $\left( \mathcal{H}%
_{3},\rd s^{2}\right) $, by the Weingarten  formula for hypersurfaces, we have that%
\[
A_{\eta }v=-\nabla _{v}\eta ,\ \ \ \ v\in T_{p}S
\]%
and the coefficients of the second fundamental form are given by
\begin{equation}
\begin{array}{ccccc}
L & = & -<\nabla _{X_{x}}\eta ,X_{x}> & = & \frac{f_{xx}+( f_{y}-\frac{x%
}{2})( f_{x}+\frac{y}{2})}{w} \\\\
M & = & -<\nabla _{X_{x}}\eta ,X_{y}> & = & \frac{f_{xy}+\frac{1}{2}\left(
f_{y}-\frac{x}{2}\right) ^{2}-\frac{1}{2}\left( f_{x}+\frac{y}{2}\right) ^{2}%
}{w} \\\\
N & = & -<\nabla _{X_{y}}\eta ,X_{y}> & = & \frac{f_{yy}-\left( f_{y}-\frac{x%
}{2}\right) \left( f_{x}+\frac{y}{2}\right) }{w}.%
\end{array}
\label{2ffund}
\end{equation}

\subsection{Vertical surface}

In this case we  consider  such a surface as a ruled surface. We parameterize the surface by

$$X(t,s)=(t,a(t),s),\;\;\;(t,s)\in U$$
where $U$ is an open set of $\mathbb{R}^{2}$. So the basis of the tangent space associated to this parametrization is
\begin{equation}
\begin{array}{ccl}
X_{t} & = & E_{1}+\dot{a}E_{2}+\frac{(a-t\dot{a})}{2}E_{3} \\
X_{s} & = & E_{3}
\end{array}
\label{basisv}
\end{equation}
and the unit normal field to this surface is
$$\eta=\frac{\dot{a}}{\sqrt{1+(\dot{a})^{2}}}E_{1}-\frac{1}{\sqrt{1+(\dot{a})^{2}}}E_{2}$$
The coefficients of the first fundamental forma in the basis $\{X_{t},X_{s}\}$ are given by
\begin{equation}
\begin{array}{ccccl}
E & = & <X_{t},X_{t}> & = & 1+\dot{a}^{2}+\frac{(a-t\dot{a})^{2}}{4} \\\\
F & = & <X_{t},X_{s}> & = &\frac{(a-t\dot{a})}{2} \\\\
G & = & <X_{s},X_{s}> & = & 1.
\end{array}
\label{1ffundv}
\end{equation}

and the coefficients of the second fundamental form are given by
\begin{equation}
\begin{array}{ccccl}
L & = & -<\nabla _{X_{t}}\eta ,X_{t}> & = & \frac{(a-t\dot{a})(1+\dot{a}^{2})-2\ddot{a}}{2\sqrt{1+\dot{a}^{2}}} \\\\
M & = & -<\nabla _{X_{t}}\eta ,X_{s}> & = & \frac{\sqrt{1+\dot{a}^{2}}}{2} \\\\
N & = & -<\nabla _{X_{s}}\eta ,X_{s}> & = & 0.%
\end{array}
\label{2ffundv}
\end{equation}

\section{The Gauss Map}
Recall that the Gauss map is a function  from an oriented surface, $S\subset \mathbb{E}^{3}$, to the unit sphere in the Euclidean space . It associates to every point on the surface its oriented unit normal vector. Considering the Euclidean space as a commutative Lie group, the Gauss map is just the translation of the unit normal vector at any point of the surface to the origin, the identity element of $\mathbb{R}^{3}$. Reasoning in this way we define a Gauss map in the following form:

\begin{definition}
Let $S\subset G$ be an orientable hypersurface of a n-dimensional Lie group $%
G,$ provided with a left invariant metric. The map%
\[
\gamma :S\rightarrow S^{n-1}=\left\{ v\in \tilde{g}:\left\vert v\right\vert
=1\right\}
\]%
where $\gamma \left( p\right) =\rd L_{p}^{-1}\circ \eta \left( p\right) $, $\tilde{g}$ the Lie algebra of $G$
and $\eta $ the unitary normal vector field of $S,$ is called the Gauss map of $S.$
\end{definition}

\noindent We observe that
$$\rd\gamma\left( T_{p}S\right) \subseteq T_{\gamma \left( p\right) }S^{n-1}
 =  \left\{ \gamma \left( p\right) \right\} ^{\perp }  =
\rd L_{p}^{-1}\left( T_{p}S\right),$$

\noindent therefore $\rd L_{p}\circ d\gamma \left( T_{p}S\right)
\subseteq T_{p}S$ .

Now we  obtain a local expression of the Gauss map $\gamma $. In fact, we consider the following sequence of maps
$$\phi:\Omega \overset{X}\longrightarrow X(\Omega)\subset \mathcal{H}_{3}\overset{\gamma}\longrightarrow S^{2}\overset{\psi}\longrightarrow \mathcal{P}$$
where, $X$ is a parametrization of $S$  and $\psi$ is given by $(\ref{psi})$.

If $S$ is a vertical surface, we have
$$X(t,s)=(t,a(t),s),\;\;\;(t,s)\in U$$
In this case, the unit normal to the surface is
$$\eta(t,s)=\frac{\dot{a}}{\sqrt{1+(\dot{a})^{2}}}E_{1}-\frac{1}{\sqrt{1+(\dot{a})^{2}}}E_{2},$$
that is, the image of $\gamma$ is the equator of the sphere. An easy computation shows that the surface is a piece of a vertical plane $Ax+By=C$ if the image of $\gamma$ is a point.

When $S$ is the graph of a smooth function $f\left(x,y\right)$ with $(x,y)$ in a domain $\Omega \subset \mathbb{R}^{2} $.  Then

\begin{equation}\label{gmap}
  \phi(x,y)=\left(-(f_{x}+\frac{y}{2}),-(f_{y}-\frac{x}{2})\right)
\end{equation}
and the Jacobian matrix  of $\phi$ is
\begin{equation}\label{jacob}
\rd \phi_{(x,y)}=\left(
                   \begin{array}{cc}
                     -f_{xx} & -f_{xy}-1/2 \\
                     -f_{xy}+1/2 & -f_{yy} \\
                   \end{array}
                 \right)
\end{equation}

\noindent Notice that
\begin{equation}
\det  \rd \phi_{(x,y)} =f_{xx}f_{yy}-f_{xy}^{2}+\frac{1}{4},  \label{rank}
\end{equation}%
and we will call this expression, the determinant of the Gauss map at the point $(x,y).$ If $\Omega=\mathbb{R}^{2}$, the greatest lower bound of the absolute value
of $\det\rd \phi_{(x,y)}$ is zero. This was proved by A. Borisenko and E.  Petrov in \cite{borisenko2011surfaces}.

We know that in the Euclidean case the differential of the Gauss
map is just the second fundamental form for surfaces in
$\mathbb{R}^{3},$ this fact can be generalized for hypersurfaces
in any Lie group. The following theorem, see \cite{ripoll1991hypersurfaces},
states a relationship between the Gauss map and the extrinsic
geometry of $S.$

\begin{theorem}
\label{gauss}Let $S$ be an orientable hypersurfaces of a Lie group. Then
$$
\rd L_{p}\circ \rd\gamma _{p}\left( v\right) =-\left( A_{\eta }\left( v\right)
+\alpha _{\bar{\eta}}\left( v\right) \right) ,\ \ \ v\in T_{p}S,
$$
where $A_{\eta }$ is the Weingarten operator, $\alpha _{\bar{\eta}}\left(
v\right) =\nabla _{v}\bar{\eta}$ and $\bar{\eta}$ is the left invariant
vector field such that $\eta \left( p\right) =\bar{\eta}\left( p\right).$
\end{theorem}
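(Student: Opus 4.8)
The plan is to work in a fixed left-invariant orthonormal frame and to reduce the identity to the Leibniz rule for the Riemannian connection. Fix a left-invariant orthonormal frame $\{X_1,\dots,X_n\}$ of $G$, so that $X_i(q)=\rd L_q(e_i)$ for the orthonormal basis $\{e_i\}$ of $\tilde{g}$. Along $S$ I would write the unit normal as $\eta(q)=\sum_i \gamma^i(q)\,X_i(q)$ for smooth functions $\gamma^i$ defined on $S$. By the very definition of the Gauss map, $\gamma(q)=\rd L_q^{-1}(\eta(q))=\sum_i \gamma^i(q)\,e_i$, so the component functions of $\gamma$ in the basis $\{e_i\}$ are exactly these $\gamma^i$. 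This is the step where left-invariance of the metric matters, since it is what makes $\{X_i\}$ orthonormal and keeps the components consistent under $\rd L_q^{-1}$.

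With this description the differential of $\gamma$ is immediate: for $v\in T_pS$,
\begin{equation*}
\rd\gamma_p(v)=\sum_i v(\gamma^i)\,e_i,\qquad\text{hence}\qquad \rd L_p\circ\rd\gamma_p(v)=\sum_i v(\gamma^i)\,X_i(p).
\end{equation*}
Next I would compute the covariant derivative of the normal field in the direction $v$ using the product rule for $\nabla$:
\begin{equation*}
\nabla_v\eta=\sum_i v(\gamma^i)\,X_i(p)+\sum_i \gamma^i(p)\,\nabla_v X_i.
\end{equation*}
The first sum is precisely $\rd L_p\circ\rd\gamma_p(v)$. The second sum is the crux: freezing the coefficients at their values $\gamma^i(p)$ produces the genuinely left-invariant field $\bar{\eta}=\sum_i\gamma^i(p)\,X_i$, which satisfies $\bar{\eta}(p)=\eta(p)$, so that $\sum_i\gamma^i(p)\,\nabla_v X_i=\nabla_v\bar{\eta}=\alpha_{\bar{\eta}}(v)$. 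Combining these observations gives
\begin{equation*}
\nabla_v\eta=\rd L_p\circ\rd\gamma_p(v)+\alpha_{\bar{\eta}}(v).
\end{equation*}

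To finish I would invoke the Weingarten formula already stated for hypersurfaces. Since $\eta$ is a unit field, $\langle\nabla_v\eta,\eta\rangle=0$, so $\nabla_v\eta$ is tangent to $S$ and equals $-A_\eta(v)$. Substituting this into the previous identity and solving for $\rd L_p\circ\rd\gamma_p(v)$ yields $\rd L_p\circ\rd\gamma_p(v)=-\left(A_\eta(v)+\alpha_{\bar{\eta}}(v)\right)$, which is the assertion. The only delicate point is the identification of the second sum with $\alpha_{\bar{\eta}}(v)$: one must verify that $\nabla_v\bar{\eta}$ depends only on the left-invariant extension determined by the value $\eta(p)$, which is exactly what the constant-coefficient expansion $\bar{\eta}=\sum_i\gamma^i(p)\,X_i$ guarantees, since constant coefficients contribute no extra term under the Leibniz rule.
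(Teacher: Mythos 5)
Your proof is correct and complete. Note that the paper itself gives no proof of this theorem: it is quoted from Ripoll \cite{ripoll1991hypersurfaces}, so there is no internal argument to compare yours with. Your computation (expanding $\eta=\sum_i\gamma^i X_i$ in a left-invariant orthonormal frame, so that $\rd L_p\circ \rd\gamma_p(v)=\sum_i v(\gamma^i)X_i(p)$; then letting the Leibniz rule split $\nabla_v\eta$ into this term plus $\sum_i\gamma^i(p)\nabla_v X_i=\nabla_v\bar{\eta}$; and finally using $|\eta|\equiv 1$ to conclude that $\nabla_v\eta$ is tangent and equals $-A_\eta(v)$) is the standard one, and it is essentially the argument of the cited reference. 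The two delicate points are handled properly: the componentwise formula $\rd\gamma_p(v)=\sum_i v(\gamma^i)e_i$ is legitimate because $\gamma$ takes values in the fixed vector space $\tilde{g}$, so its differential is computed by differentiating component functions; and the identification of $\sum_i\gamma^i(p)\nabla_v X_i$ with $\alpha_{\bar{\eta}}(v)$ is legitimate because $\bar{\eta}=\sum_i\gamma^i(p)X_i$ has constant coefficients in the left-invariant frame, hence is itself left-invariant and satisfies $\bar{\eta}(p)=\eta(p)$, exactly as the statement requires.
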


As a consequence of this theorem we have the following result
\begin{theorem}
\label{vertical}The vertical plane $Ax+By=C$ is the unique connected surface in $%
\mathcal{H}_{3}$ with the property that its Gauss map is constant.
\end{theorem}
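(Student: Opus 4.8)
The plan is to recast the constancy of the Gauss map as the statement that the unit normal field is left-invariant, and then to combine Theorem \ref{gauss} with the self-adjointness of the Weingarten operator to force the normal to be horizontal; a short geometric argument will then pin down the surface. By definition $\gamma(p)=\rd L_p^{-1}\eta(p)$, so $\gamma\equiv v_{0}$ is constant exactly when $\eta(p)=\rd L_p(v_{0})=\bar\eta(p)$ for the left-invariant field $\bar\eta=aE_{1}+bE_{2}+cE_{3}$, where $(a,b,c)$ are the constant coordinates of $v_{0}$ and $a^{2}+b^{2}+c^{2}=1$. Thus the theorem reduces to: the unit normal of $S$ is left-invariant iff $S$ is a vertical plane. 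One direction is immediate from the vertical-surface formulas: parametrizing $Ax+By=C$ as $X(t,s)=(t,a(t),s)$ with $a$ affine gives $\dot a$ constant, and then $\eta=\frac{\dot a}{\sqrt{1+\dot a^{2}}}E_{1}-\frac{1}{\sqrt{1+\dot a^{2}}}E_{2}$ has constant components in the frame $\{E_i\}$, so $\gamma$ is constant.

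The main step is the converse, and its crux is showing $c=0$. By Theorem \ref{gauss}, $\gamma$ is constant iff $\rd\gamma_{p}=0$, that is
\[
A_{\eta}(v)=-\nabla_{v}\bar\eta\qquad\text{for all } v\in T_{p}S=\bar\eta^{\perp}.
\]
Since $|\bar\eta|\equiv 1$ one has $\langle\nabla_{v}\bar\eta,\bar\eta\rangle=0$, so $v\mapsto\nabla_{v}\bar\eta$ is a genuine endomorphism of $T_{p}S$; because $A_{\eta}$ is self-adjoint, this endomorphism must be self-adjoint too, i.e.\ the bilinear form $(v,w)\mapsto\langle\nabla_{v}\bar\eta,w\rangle$ is symmetric on $\bar\eta^{\perp}$. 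Reading $\nabla_{E_{i}}\bar\eta$ off the connection table, its antisymmetric part is $\tfrac{c}{2}$ times the infinitesimal rotation in the $E_{1}E_{2}$-plane; evaluating on a basis $v,w$ of $\bar\eta^{\perp}$ (so that $v\times w$ is a nonzero multiple of $v_{0}$) gives $\langle\nabla_{v}\bar\eta,w\rangle-\langle\nabla_{w}\bar\eta,v\rangle=-\lambda c^{2}$ with $\lambda\neq 0$. Symmetry forces $c^{2}=0$, hence $c=0$. Equivalently, $\bar\eta^{\perp}$ is the left-invariant distribution spanned by $E_{3}$ and a horizontal field, whose bracket obstruction lies in $cE_{3}$, so Frobenius integrability of the tangent distribution of the actual surface $S$ yields the same conclusion.

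Finally I would identify $S$. With $c=0$ the normal $\bar\eta=aE_{1}+bE_{2}$ is horizontal, so $E_{3}=\partial_{z}$ is everywhere orthogonal to $\eta$ and hence tangent to $S$; thus $S$ is a union of vertical lines and projects onto a curve $t\mapsto(x(t),y(t))$ in the $xy$-plane. Expressing a tangent velocity in the frame $\{E_i\}$, the condition $\langle\eta,\dot X\rangle=0$ collapses to $a\dot x+b\dot y=0$, so the projected curve lies on a line $ax+by=\text{const}$. Since $S$ is connected this is a single line, and $S=\{ax+by=C\}$ is the asserted vertical plane.

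The step I expect to be most delicate is the passage from the infinitesimal/local conclusion to the global one: ensuring that a horizontal left-invariant normal really produces one genuine vertical plane rather than merely a local integral leaf, which is precisely where connectedness of $S$ is used. The computation forcing $c=0$ is the conceptual heart of the argument, but it is routine once the antisymmetric part of $\nabla\bar\eta$ has been written down.
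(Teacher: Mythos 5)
Your proof is correct, and it takes a genuinely different route from the paper's. The paper disposes of the theorem by the graph/vertical dichotomy: for vertical surfaces it relies on the computation in Section 5 (the image of $\gamma$ lies in the equator and is a point exactly for pieces of vertical planes $Ax+By=C$), and for graphs it simply cites \cite{figueroa2012gauss} for the fact that no graph over the $xy$-plane has constant Gauss map — so it is essentially a proof by case split plus external reference. You instead argue invariantly: constancy of $\gamma$ means $\eta$ coincides along $S$ with a fixed left-invariant field $\bar\eta=aE_{1}+bE_{2}+cE_{3}$; Theorem \ref{gauss} with $\rd\gamma=0$ gives $A_{\eta}(v)=-\nabla_{v}\bar\eta$ on $T_{p}S$, and self-adjointness of $A_{\eta}$ forces the form $(v,w)\mapsto\langle\nabla_{v}\bar\eta,w\rangle$ to be symmetric; your identification of its antisymmetric part as $\tfrac{c}{2}$ times the rotation in the $E_{1}E_{2}$-plane is correct (from the connection table, $\langle\nabla_{E_{1}}\bar\eta,E_{2}\rangle-\langle\nabla_{E_{2}}\bar\eta,E_{1}\rangle=-c$, all other antisymmetric entries vanish), and evaluating on a tangent basis indeed yields $-\lambda c^{2}$ with $\lambda\neq0$, so $c=0$; the final integration $a\dot x+b\dot y=0$ then traps connected $S$ in one plane $ax+by=C$. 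The Frobenius variant you sketch is equally valid and even bypasses Theorem \ref{gauss}: the bracket obstruction of the left-invariant distribution $\bar\eta^{\perp}$ is $(a^{2}+b^{2})c\,E_{3}$ (and $E_{3}$ itself when $a=b=0$), so integrability — which holds because $\bar\eta^{\perp}$ coincides with the tangent distribution of an actual surface — forces $c=0$. What your route buys: it is self-contained, avoids the dichotomy and the citation, and explains the phenomenon conceptually — the only integrable left-invariant $2$-plane fields in $\mathcal{H}_{3}$ are those containing the center $\mathbb{R}E_{3}$, i.e.\ tangent to vertical planes; the paper's route buys only brevity. Two cosmetic loose ends in yours, neither a real gap: the parametrization $(t,a(t),s)$ in the easy direction misses the planes $x=\mathrm{const}$ (handled by a rotation about the $z$-axis, an ambient isometry), and your argument literally shows $S$ is an open connected subset of a vertical plane — equality $S=\{ax+by=C\}$ needs completeness or maximality, a caveat already implicit in the statement of the theorem itself.
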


\begin{proof}We can prove that there is no graph of a smooth function with constant Gauss map , for details see \cite{figueroa2012gauss}
\end{proof}

To end this section, we study the effect of the isometries of the Heisenberg group $\mathcal{H}_{3}$ on the  Gauss map of a surface.

\begin{theorem}

  Let $S$ be a graph of a smooth function $f:\Omega \rightarrow \mathbb{R}$ where $\Omega $ is an open set of $\mathbb{R}^{2}$ and $\phi:\Omega\rightarrow \mathcal{P}$ its Gauss map, where $X(\Omega)=S$.
\begin{enumerate}
  \item If $\rho_{\theta}:\mathcal{H}_{3}\rightarrow \mathcal{H}_{3}$ is a rotation about the $z$ axis by an angle $\theta$, then the Gauss map of $\rho_{\theta} (S)$ is $r_{\theta}\circ \phi$, where $r_{\theta}:\mathcal{P}\rightarrow \mathcal{P}$ is a rotation about the origin by an angle $\theta$.
  \item If $\sigma:\mathcal{H}_{3}\rightarrow \mathcal{H}_{3}$ is a reflection across the line $ax+by=0$ compound with the reflection about the plan $z=0$  then the gauss map of $\sigma (S)$ is $\tau\circ \phi$, where $\tau:\mathcal{P}\rightarrow \mathcal{P}$ is a reflection across the line $-bx+ay=0$.
\item If $L:\mathcal{H}_{3}\rightarrow \mathcal{H}_{3}$ is a left translation,  then the gauss map of $L(S)$ is $ \phi. $
\end{enumerate}
\end{theorem}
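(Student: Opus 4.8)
The unifying idea is a transformation law for the abstract Gauss map $\gamma(p)=\rd L_p^{-1}\circ\eta(p)$ under an ambient isometry. The plan is to first record how $\gamma$ behaves under each of the two building blocks of $\mathrm{Iso}(\mathcal{H}_3)$ — a left translation $L$ and a linear part $A$ (a rotation about the $z$-axis, or a reflection composed with the $z$-flip) — and only then to push the result through $\psi$ to land in the Gans plane $\mathcal{P}$.

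For a left translation $L_g$ I would use $L_g\circ L_p=L_{g\ast p}$, hence $\rd L_{g\ast p}^{-1}=\rd L_p^{-1}\circ \rd L_g^{-1}$; together with $\eta_{L_g(S)}(g\ast p)=\rd L_g(\eta(p))$ this gives $\gamma_{L_g(S)}(g\ast p)=\gamma_S(p)$, i.e. $\gamma$ is left-invariant, so $\phi$ is unchanged (Part 3). For the linear part $A$, which is a Lie-group automorphism fixing the identity, the relation $A\circ L_p=L_{A(p)}\circ A$ differentiates to $\rd L_{A(p)}^{-1}\circ \rd A_p=\rd A_e\circ \rd L_p^{-1}$; combined with $\eta_{A(S)}(A(p))=\rd A_p(\eta(p))$ this yields the equivariance $\gamma_{A(S)}(A(p))=\rd A_e\,\gamma_S(p)=A\,\gamma_S(p)$, since $A$ is linear. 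From the normal (\ref{normal}), $\gamma_S(p)=(-P/w,\,-Q/w,\,1/w)$ with $P=f_x+\tfrac{y}{2}$, $Q=f_y-\tfrac{x}{2}$ and $w$ as in (\ref{w}), so that $\psi(\gamma_S(p))=(-P,-Q)=\phi$, matching (\ref{gmap}).

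It then remains to see how $A$ acts after $\psi$, i.e. to compute $\psi(A\,\gamma_S(p))$. For a rotation $\rho_\theta$ about the $z$-axis, $A$ rotates the $(e_1,e_2)$-block and fixes the $e_3$-component $1/w$; since $\psi$ divides the first two coordinates by the third, the rotation passes through unchanged and $\psi(A\gamma)=r_\theta(-P,-Q)=r_\theta\circ\phi$ (Part 1). For $\sigma$, the linear part reflects $(e_1,e_2)$ across $ax+by=0$ by a matrix $\sigma_0$ and sends $e_3\mapsto -e_3$; here the $z$-flip is essential, because dividing the reflected first two coordinates by $-1/w$ introduces an extra sign, giving $\psi(A\gamma)=-\sigma_0(-P,-Q)=\sigma_0(P,Q)$. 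I would then normalize $a^2+b^2=1$, write $\sigma_0=\begin{bmatrix} b^2-a^2 & -2ab\\ -2ab & a^2-b^2\end{bmatrix}$, and check the linear-algebra identity $\sigma_0=-\tau$, where $\tau$ is reflection across $-bx+ay=0$; equivalently $-\sigma_0$ is symmetric with determinant $-1$ and fixes the direction $(a,b)$, i.e. the line $-bx+ay=0$ (Part 2).

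The main obstacle is precisely this last identification in Part 2: the mirror line seen in $\mathcal{P}$ is $-bx+ay=0$, whose normal $(-b,a)$ is the normal $(a,b)$ of the original mirror $ax+by=0$ rotated by $90^\circ$. This rotation of the axis is not obvious a priori; it is produced by the interaction of three sign sources — the reflection $z\mapsto -z$, the overall minus sign in $\phi=(-P,-Q)$, and the linearity of $\sigma_0$ — so the care lies in tracking them consistently (including checking that the $z$-flip restores, rather than reverses, the coherent orientation of the normal, so that $\gamma_{A(S)}(A(p))=A\,\gamma_S(p)$ holds with the correct sign). As a purely computational cross-check I would also recompute each transformed height function directly — e.g. $\tilde f(u,v)=-f(\sigma_0(u,v))$ for $\sigma$ and $\tilde f(u,v)=f(u\cos\theta+v\sin\theta,\,-u\sin\theta+v\cos\theta)$ for $\rho_\theta$ — and verify via the chain rule that $\tilde f_u+\tfrac{v}{2}$ and $\tilde f_v-\tfrac{u}{2}$ reproduce $r_\theta(-P,-Q)$ and $\sigma_0(P,Q)$ respectively, confirming the stated forms of the Gauss maps.
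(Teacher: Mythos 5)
Your proposal is correct, but it takes a genuinely different route from the paper. The paper argues entirely by coordinates: for each of the three isometries it writes the image surface as the graph of a new height function $h(\overline{x},\overline{y})$ (e.g.\ $h=-f(\overline{x}\cos\theta+\overline{y}\sin\theta,\overline{x}\sin\theta-\overline{y}\cos\theta)$ for $\sigma$), computes $h_{\overline{x}}+\tfrac{\overline{y}}{2}$ and $h_{\overline{y}}-\tfrac{\overline{x}}{2}$ by the chain rule, and reads the transformed Gauss map off formula (\ref{gmap}); this is exactly what you relegate to your final ``computational cross-check.'' Your main argument instead proves a structural equivariance law for the Lie-group Gauss map $\gamma(p)=\rd L_p^{-1}\circ\eta(p)$: left-invariance $\gamma_{L_g(S)}\circ L_g=\gamma_S$ from $L_g\circ L_p=L_{g\ast p}$, and $\gamma_{A(S)}\circ A=\rd A_e\circ\gamma_S$ from $A\circ L_p=L_{A(p)}\circ A$ for an automorphism-isometry $A$, after which everything is transported through $\psi$. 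This buys a unified proof of all three parts from one lemma, works in any Lie group with left-invariant metric (the coordinate proof is specific to graphs in $\mathcal{H}_3$), and — importantly — your careful bookkeeping of the three sign sources in Part 2 ($z$-flip, the minus in $\phi=(-P,-Q)$, linearity of $\sigma_0$) lands cleanly on $\widetilde{\phi}=(-\sigma_0)\circ\phi=\tau\circ\phi$ with $\tau=-\sigma_0$ the reflection across $-bx+ay=0$; the paper's own proof of Part 2 ends with the garbled assertion $\widetilde{\phi}=-\rho\circ\phi$ where $\rho$ is the reflection across $-bx+ay=0$, which taken literally has one sign too many, and your identity $\tau=-\sigma_0$ is precisely what repairs it (the paper's Part 3 also contains an obvious typo, $h_{\overline{y}}-\tfrac{\overline{x}}{2}=f_x+\tfrac{y}{2}$ instead of $f_y-\tfrac{x}{2}$). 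What the paper's route buys in exchange is elementarity: it needs neither the fact that the linear parts are group automorphisms nor any discussion of how the choice of unit normal (upward vs.\ $\rd A_p(\eta)$) interacts with $\psi$ — two points your argument does have to address, and does: the automorphism property holds because rotations preserve and $\sigma_0$ reverses the symplectic form $ay-bx$ while the $z$-flip compensates, and the normal orientation is harmless because the formula $\psi(x,y,z)=(x/z,y/z)$ is antipodally invariant.
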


\begin{proof}
Recall that the Gauss map of $S$ is given by
$$\phi(x,y)=-(f_{x}+\frac{y}{2},f_{y}-\frac{x}{2})$$
  \begin{enumerate}

    \item Since $S$ is the graphic of $f$, then $\rho_{\theta}(S)$ is the graphic of the smooth function $h(\overline{x},\overline{y})$ where
$$\overline{x}=x\cos\theta-y\sin\theta ,\qquad \qquad \overline{y}=x\sin\theta+y\cos\theta$$
and
$$h(\overline{x},\overline{y})=f(\overline{x}\cos\theta+\overline{y}\sin\theta,-\overline{x}\sin\theta+\overline{y}\cos\theta)$$
The Gauss map of $\rho_{\theta}(S)$ is given by $\widetilde{\phi}(\overline{x},\overline{y})=-(h_{\overline{x}}+\frac{\overline{y}}{2},  h_{\overline{y}}-\frac{\overline{x}}{2})$, where
\begin{equation}
\nonumber
\begin{array}{rcl}
  h_{\overline{x}}+\frac{\overline{y}}{2} &=& (f_{x}+\frac{y}{2})\cos\theta-(f_{y}-\frac{x}{2})\sin\theta \\\\
 h_{\overline{y}}-\frac{\overline{x}}{2} &=& (f_{x}+\frac{y}{2})\sin\theta-(f_{y}-\frac{x}{2})\cos\theta
\end{array}
\end{equation}
That is $\widetilde{\phi}(\overline{x},\overline{y})=r_{\theta}\circ \phi(x,y).$

    \item In this case,  $\sigma(S)$ is the graphic of the smooth function $h(\overline{x},\overline{y})$ where
$$\overline{x}=x\cos\theta+y\sin\theta ,\qquad \qquad \overline{y}=x\sin\theta-y\cos\theta$$
and
$$h(\overline{x},\overline{y})=-f(\overline{x}\cos\theta+\overline{y}\sin\theta,\overline{x}\sin\theta-\overline{y}\cos\theta)$$
The Gauss map of $\sigma(S)$ is given by $\widetilde{\phi}(\overline{x},\overline{y})=(h_{\overline{x}}+\frac{\overline{y}}{2},  h_{\overline{y}}-\frac{\overline{x}}{2})$, where
\begin{equation}
\nonumber
\begin{array}{rcl}
  h_{\overline{x}}+\frac{\overline{y}}{2} &=& -(f_{x}+\frac{y}{2})\cos\theta-(f_{y}-\frac{x}{2})\sin\theta \\\\
 h_{\overline{y}}-\frac{\overline{x}}{2} &=& -(f_{x}+\frac{y}{2})\sin\theta+(f_{y}-\frac{x}{2})\cos\theta
\end{array}
\end{equation}
That is $\widetilde{\phi}(\overline{x},\overline{y})=-\rho\circ \phi(x,y)$, where $\rho:\mathcal{P}\rightarrow \mathcal{P}$ is a reflection across the line $-bx+ay=0$
    \item Let $L_{(a,b,c)}$ a left translation in $\mathcal{H}_{3},$
$$L_{(a,b,c)}(x,y,f(x,y))=(x+a,y+b,f(x,y)+\frac{ay}{2}-\frac{bx}{2})$$
Then $L_{(a,b,c)}(S)$ is the graphic of $h(\overline{x},\overline{y})$ where
$$\overline{x}=x+a, \qquad \overline{y}=y+b$$ and
$$h(\overline{x},\overline{y})=f(\overline{x}-a,\overline{y}-b)+\frac{a\overline{y}}{2}-\frac{b\overline{x}}{2}+c$$
The Gauss map of $L_{(a,b,c)}(S)$
\begin{equation}
\nonumber
\begin{array}{rcl}
  h_{\overline{x}}+\frac{\overline{y}}{2} &=& (f_{x}+\frac{y}{2}) \\\\
 h_{\overline{y}}-\frac{\overline{x}}{2} &=& (f_{x}+\frac{y}{2})
\end{array}
\end{equation}
That is, $\widetilde{\phi}(\overline{x},\overline{y})=\phi(x,y)$
  \end{enumerate}
\end{proof}

\section{Tension field and mean curvature of surfaces in $\mathcal{H}_{3}$}
In this section we  establishes a relationship between the tension field of the Gauss map and  mean curvature
of a surface in $\mathcal{H}_{3}$.

Firstly, we recall the mean curvature formula of any surface of $\mathcal{H}_{3}$ in terms of the coefficients of their first and second fundamental forms in some parametrization.
\begin{equation}
H=\frac{1}{2}\left( \frac{EN+GL-2FM}{EG-F^{2}}\right).  \label{mean curvatue}
\end{equation}
When the surface is vertical we use the coefficients given in (\ref{1ffundv}) and (\ref{2ffundv}):
$$H=\frac{1}{2}[\frac{\ddot{a}}{(1+\dot{a}^{2})}]$$
If $H=0$, minimal surface, the surface is a vertical plane $Ax+By=C$.

If $H$ is constant and different from zero, we have the following differential equation
$$\dot{a}=-2H(1+\dot{a}^{2})^{3/2}$$
Solving the above equation, we obtain a vertical surface of constant mean curvature $H$, parameterized by
$$X(t,s)=(t,\sqrt{\frac{-2Ht}{1+2Ht}}t,s)$$
where $(t,s)\in I\times \mathbb{R}$

If the surface is  graph of a smooth function $f$, we replace the coefficients given in  $\left( \ref{1ffund}\right) $ and $\left(\ref{2ffund}\right)$
into the mean curvature formula,
\begin{equation}\label{hec}
\frac{\left( 1+q^{2}\right) f_{xx}-2pq f_{xy}+\left( 1+p^{2}\right) f_{yy}}{(1+p ^{2}+q ^{2})^{3/2}}=2H,
\end{equation}
where $p=f_{x}+\frac{y}{2}$ and $q=f_{y}-\frac{x}{2}$.
Unlike the minimal surface case, for graphs of non-zero constant mean curvature, we have a Bernstein type theorem (see \cite{figueroa2012gauss} for more details).

\begin{theorem}
There are no complete graphs of constant mean curvature $H\neq 0.$
\end{theorem}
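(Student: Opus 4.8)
The plan is to put equation (\ref{hec}) into divergence form and then run a Heinz-type integral estimate. Writing $p=f_{x}+\frac{y}{2}$, $q=f_{y}-\frac{x}{2}$ and $w=\sqrt{1+p^{2}+q^{2}}$, I would first check by a direct differentiation that the left-hand side of (\ref{hec}) is exactly the Euclidean divergence of
$$V=\frac{1}{w}\left(f_{x}+\tfrac{y}{2},\ f_{y}-\tfrac{x}{2}\right)=\frac{1}{w}(p,q).$$
Indeed, expanding $\partial_{x}(p/w)+\partial_{y}(q/w)$ the two cross terms $\pm\frac{pq}{2w^{3}}$ cancel and one is left with $\big[(1+q^{2})f_{xx}-2pq\,f_{xy}+(1+p^{2})f_{yy}\big]/w^{3}$, so the constant mean curvature condition is precisely $\mathrm{div}\,V=2H$. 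The crucial feature is that $|V|=\sqrt{p^{2}+q^{2}}/w<1$ pointwise, because $w^{2}=1+p^{2}+q^{2}$ strictly exceeds $p^{2}+q^{2}$.

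Granting this, the second step is the estimate of Heinz. If the graph were entire, then for every $R>0$ the Euclidean disk $D_{R}$ lies in the domain and the divergence theorem gives
$$2H\pi R^{2}=\int_{D_{R}}\mathrm{div}\,V\,dx\,dy=\oint_{\partial D_{R}}V\cdot n\,ds.$$
Bounding the flux by $|V|<1$ we obtain $2|H|\pi R^{2}\le\oint_{\partial D_{R}}|V|\,ds<2\pi R$, that is $|H|\,R<1$ for all $R>0$; letting $R\to\infty$ contradicts $H\neq0$. More precisely, the same computation over any disk $D_{R}\subset\Omega$ shows that the domain of a graph of constant mean curvature $H\neq0$ has inradius at most $1/|H|$.

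The step I expect to be the main obstacle is upgrading this from entire to complete graphs. The inradius bound alone does not forbid a complete graph over a proper subdomain $\Omega\subsetneq\mathbb{R}^{2}$ for which $|\nabla f|\to\infty$ at $\partial\Omega$. To close this gap I would use the first fundamental form (\ref{1ffund}): the matrix $\left(\begin{smallmatrix}1+p^{2}&pq\\ pq&1+q^{2}\end{smallmatrix}\right)$ has least eigenvalue at least $\tfrac12$, so the induced metric dominates $\tfrac12(dx^{2}+dy^{2})$ and a graph over $\mathbb{R}^{2}$ is automatically complete; thus the entire case already disposes of domains equal to $\mathbb{R}^{2}$. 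For a complete graph over a smaller domain, I would either establish an interior gradient estimate for (\ref{hec}) that rules out the boundary blow-up completeness would require, or replace the disks $D_{R}$ by intrinsic geodesic balls on the surface and carry out the flux estimate there; the delicate point is keeping control of $V\cdot n$ along the intrinsic spheres, where $|V|$ tends to $1$, so that the inequality $|H|R<1$ persists in the limit.
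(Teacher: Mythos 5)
Your core computation is correct: with $p=f_{x}+\tfrac{y}{2}$, $q=f_{y}-\tfrac{x}{2}$, $w=\sqrt{1+p^{2}+q^{2}}$, one indeed has $\partial_{x}(p/w)+\partial_{y}(q/w)=\big[(1+q^{2})f_{xx}-2pq\,f_{xy}+(1+p^{2})f_{yy}\big]/w^{3}$, because $p_{y}=f_{xy}+\tfrac12$ and $q_{x}=f_{xy}-\tfrac12$ enter only through the combination $p_{y}+q_{x}=2f_{xy}$, so the $\pm\tfrac12 pq$ terms cancel. Thus (\ref{hec}) is $\mathrm{div}\,V=2H$ with $|V|<1$, and the flux estimate $2|H|\pi R^{2}<2\pi R$ over Euclidean disks correctly rules out graphs with $\Omega=\mathbb{R}^{2}$ and $H\neq 0$. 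Your eigenvalue remark is also right, and in fact better than you claim: the eigenvalues of the matrix of (\ref{1ffund}) are $1$ and $w^{2}$, so the induced metric dominates the full Euclidean metric, not just half of it.

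The ``main obstacle'' you single out, however, is not an obstacle for the statement as this paper intends it. In Section 6, immediately before Example \ref{plan}, the paper fixes its terminology: ``complete minimal graphs, that is $\Omega=\mathbb{R}^{2}$'' --- in this paper a complete graph means an entire graph (a convention made consistent by your own observation that entire graphs are automatically metrically complete). So your Heinz-type argument already proves the theorem in full, and the speculative last paragraph (interior gradient estimates, flux over intrinsic geodesic balls) can simply be deleted rather than completed. For comparison: the paper gives no proof of this theorem at all, deferring entirely to \cite{figueroa2012gauss}; your divergence-form/flux argument is the classical Heinz--Chern method adapted to (\ref{hec}), which is exactly the kind of argument that reference supplies, so your write-up in effect furnishes the proof the paper omits.
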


When $H=0$ we obtain the equation of the minimal graphs in $\mathcal{H}_{3}$
\begin{equation}
\left( 1+q^{2}\right) f_{xx}-2pq f_{xy}+\left( 1+p^{2}\right) f_{yy}=0,  \label{minec}
\end{equation}

This equation appears for the first time in \cite{bekkar1991exemples}. Before presenting some consequences of the above equation, we shall show some examples of complete minimal graphs, that is $\Omega=\mathbb{R}^{2}$, and using formulas $(\ref{gmap}) $ and $\left( \ref{rank}\right) $ to find the image and the rank of their Gauss map.
\begin{example} \label{plan}
As in Euclidean space $\mathbb{E}^{3},$ the plane $f\left( x,y\right)
=ax+by+c$ is a minimal graph of $\mathcal{H}_{3}.$ In this case  $\phi(\mathbb{R}^{2})=\mathbb{R}^{2}$  and the rank of the Gauss map is $2.$
\end{example}

Another minimal graph may be obtained by searching for solutions of Scherk
type, i.e for solutions of the form $f\left( x,y\right) =g\left( x\right)
+h\left( y\right) +\frac{xy}{2}.$ From this method we find, among others,
the following example, see \cite{sari1992surfaces}.

\begin{example}
A surface of saddle type:
$$f\left( x,y\right) =\frac{xy}{2}+k\left[ \ln
\left( y+\sqrt{1+y^{2}}\right) +y\sqrt{1+y^{2}}\right] ,$$ where
$k\in \mathbb{R}$. Notice that this minimal surface is ruled by
affine lines, i.e. translations of 1-parameter subgroups.
$$
\phi(\mathbb{R}^{2})=\left\{
  \begin{array}{ll}
    u=0, & \hbox{\text{if} $k=0$} \\
    \frac{v^{2}}{2k^{2}}-u^{2}=1, & \hbox{\text{if} $k\neq 0$ }
  \end{array}
\right.
$$
 The image of $\phi$ are geodesics of the hyperbolic plane $\mathcal{P}$ and its rank  is 1.
\end{example}

The following example was found by B. Daniel, using a Weierstrass representation,( see \cite{daniel2011gauss} for more details)
\begin{example} \label{dan}
Let $f(x,y)=xh(y)$, where $h(y)=s-\frac{1}{2\coth s}$,  $s$ and $y$ are related by the equation
$y=\coth s-2s, \;\;s>0$. The image of $\phi$ is
$$\phi(\mathbb{R}^{2})=\{(u,v):v>0\}$$
And the determinant of the Gauss map is equal to
$$-\frac{1}{4}(\frac{1}{\coth^{4}s}-1),\;\;\; s>0.$$
So, the rank of its Gauss map is 2.
\end{example}

As we have seen,we have several solutions of $(\ref{minec})$ defined on the entire $xy$-plane.  Unlike the case of Euclidean spaces, where the only complete minimal graphs are linear (Bernstein's  theorem).

On the other hand, is known that the Gauss map of a minimal surface in the Euclidean space is antiholomorphic. For any graph in the Heisenberg group $\mathcal{H}_{3}$, the following  theorem relates the mean curvature and the tension field of the Gauss map of the surface.
\begin{theorem}\label{HG}
Let $S\subset \mathcal{H}_{3}$ be a graph of a smooth function $f:\Omega \rightarrow \mathbb{R}$ where $\Omega $ is an open set of $\mathbb{R}^{2}$. Suppose that $H$ and $\phi$ are the mean curvature and the Gauss map of $S$, respectively. Then  the tension field of the Gauss map $\tau (\phi)=(\tau (\phi^1),\tau (\phi^2))$ satisfy
\[
\begin{array}{ccc}
  \tau (\phi^1)+2wH_x & = & H[\frac{2(f_{y}-\frac{x}{2})}{w}-4\frac{\partial w}{\partial x}+4(f_{x}+\frac{y}{2})Hw^{2}] \\\\
   \tau (\phi^2)+2wH_y& = &H[ \frac{-2(f_{x}+\frac{y}{2})}{w}-4\frac{\partial w}{\partial y}+4(f_{y}-\frac{x}{2})Hw^{2}]
\end{array}
\]
where $w= \sqrt{1+(f_{x}+\frac{y}{2})^{2}+(f_{y}-\frac{x}{2})^{2}}.$
\end{theorem}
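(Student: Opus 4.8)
The plan is to compute the tension field directly from its coordinate expression, taking the domain $\Omega$ equipped with the first fundamental form $g$ of $S$ as the source metric and the Gans model $(\mathcal{P},h)$ as the target. First I would record the data. Writing $p=f_{x}+\frac{y}{2}$ and $q=f_{y}-\frac{x}{2}$, the Gauss map is $\phi=(-p,-q)$; by $(\ref{1ffund})$ the source metric has $E=1+p^{2}$, $F=pq$, $G=1+q^{2}$, so that $\det g=EG-F^{2}=w^{2}$ and the inverse is $g^{11}=(1+q^{2})/w^{2}$, $g^{12}=-pq/w^{2}$, $g^{22}=(1+p^{2})/w^{2}$. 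The first derivatives of $\phi$ are read off from the Jacobian $(\ref{jacob})$: $\phi^{1}_{x}=-f_{xx}$, $\phi^{1}_{y}=-f_{xy}-\frac{1}{2}$, $\phi^{2}_{x}=-f_{xy}+\frac{1}{2}$, $\phi^{2}_{y}=-f_{yy}$. A key structural remark is that, since the target point is $(u,v)=(-p,-q)$, every Gans Christoffel symbol of Section 2 carries the denominator $u^{2}+v^{2}+1=w^{2}$, which combines cleanly with the $w^{2}$ appearing in the $g^{ij}$.

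Next I would split the tension field into its two standard contributions,
\[
\tau(\phi)^{\gamma}=\frac{1}{w}\,\partial_{i}\!\left(w\,g^{ij}\,\partial_{j}\phi^{\gamma}\right)+g^{ij}\,\widetilde{\Gamma}^{\gamma}_{\alpha\beta}(\phi)\,\partial_{i}\phi^{\alpha}\,\partial_{j}\phi^{\beta},
\]
where $\widetilde{\Gamma}$ are the Christoffel symbols of $h$ and summation over $i,j,\alpha,\beta$ is understood. I would treat the divergence (Laplace--Beltrami) term first. When the outer derivative falls on $\partial_{j}\phi^{\gamma}$ it produces second derivatives of $\phi$, i.e.\ third derivatives of $f$; for $\gamma=1$ this third-order part equals $-\frac{1}{w^{2}}\big[(1+q^{2})f_{xxx}-2pq\,f_{xxy}+(1+p^{2})f_{xyy}\big]$, and similarly for $\gamma=2$. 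Differentiating the mean-curvature identity $(\ref{hec})$, namely $(1+q^{2})f_{xx}-2pq\,f_{xy}+(1+p^{2})f_{yy}=2Hw^{3}$, in $x$ (resp.\ $y$) shows that this third-order combination equals $2w^{3}H_{x}+6Hw^{2}\frac{\partial w}{\partial x}$ plus explicit second-order terms. Dividing by $w^{2}$ therefore yields exactly the $-2wH_{x}$ (resp.\ $-2wH_{y}$) responsible for the $2wH_{x}$, $2wH_{y}$ on the left-hand sides of the statement, together with a multiple of $\frac{\partial w}{\partial x}$ that feeds into the $-4\frac{\partial w}{\partial x}$ term on the right.

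Then I would compute the connection term $g^{ij}\widetilde{\Gamma}^{\gamma}_{\alpha\beta}(\phi)\,\partial_{i}\phi^{\alpha}\partial_{j}\phi^{\beta}$ by substituting the explicit Gans symbols and the four derivatives of $\phi$ above; thanks to the common $w^{2}$ denominators this is rational in $p,q$ and the second derivatives of $f$. Adding the two contributions, I would systematically eliminate every remaining second-derivative combination by means of $(\ref{hec})$, rewriting it through $H$, and collect all terms in $p=f_{x}+\frac{y}{2}$, $q=f_{y}-\frac{x}{2}$, $w$, $\frac{\partial w}{\partial x}$ and $\frac{\partial w}{\partial y}$. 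Matching the components $\gamma=1,2$ should then reproduce the two displayed identities; the quadratic terms $4(f_{x}+\frac{y}{2})Hw^{2}$ and $4(f_{y}-\frac{x}{2})Hw^{2}$ come from the connection term, which is quadratic in the second derivatives of $f$ and which $(\ref{hec})$ recasts in terms of $H$.

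The main obstacle is the volume and bookkeeping of the third-order terms: one must verify that the third derivatives of $f$ emerging from the Laplace--Beltrami term regroup precisely into the derivatives of the mean-curvature numerator, so that $H_{x}$ and $H_{y}$ appear and no uncontrolled third-order remainder survives, while simultaneously tracking how the lower-order pieces produced by that regrouping combine with the connection term. This delicate and lengthy simplification is exactly what the Mathematica routine in the appendix is designed to carry out and certify.
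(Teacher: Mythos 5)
Your proposal is correct and takes essentially the same route as the paper: a direct coordinate computation of the tension field with the first fundamental form (\ref{1ffund}) as source metric and the Gans model as target, where the third derivatives of $f$ are cancelled by invoking the $x$- (resp.\ $y$-) derivative of the mean-curvature identity (\ref{hec}) --- which is exactly what the paper's device of adding $2(wH)_x$ to $\tau(\phi^1)$ accomplishes --- and the remaining second-order terms are repackaged into $H$, $w$, $\frac{\partial w}{\partial x}$, $\frac{\partial w}{\partial y}$ by using (\ref{hec}) again. Like the paper, you delegate the final heavy simplification to the Mathematica routine of Appendix \ref{appendix:a}, so the two arguments differ only in bookkeeping, not in substance.
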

\begin{proof}

   We begin by recalling that the Gauss map of $S$  is given by  $\phi(x,y)=-(f_{x}+y/2,f_{y}-x/2)$ and the metric tensors of $S$  is $g=(g_{ij})$. By definition (see for instance \cite{eells1978report}), the tension field of $\phi$ is given by
  $$\tau(\phi^\alpha)=\triangle\phi^{\alpha}+\Gamma'^{\alpha}_{\beta\gamma}\phi^{\beta}_{i}\phi^{\gamma}_{j}g^{ij}$$

 \noindent where $\alpha,\beta=1,2$,  $\phi^{1}=-(f_{x}+y/2)$, $\phi^{2}=-(f_{y}-x/2)$,
$\triangle$ is the Riemannian Laplace on $S$, $\Gamma'^{\alpha}_{\beta\gamma}$ are the Christoffel symbols of the metric $h$ of $\mathcal{P}$, evaluated in the Gauss map and  $\phi^{\beta}_{1}=\frac{\partial\phi^{\beta}}{\partial x}$ and $\phi^{\beta}_{2}=\frac{\partial\phi^{\beta}}{\partial y}$.

  First we compute the first component of the tension field and add twice the $x$-derivative of $wH$. See (\ref{hec}) and (\ref{T1Hu}) from  Appendix \ref{appendix:a}.

  \begin{equation*}
\begin{split}
\tau (\phi^{1})+2(Hw)_{x} &= 4[(1+(f_{y}-\frac{x}{2})^{2})f_{xx}-2( f_x+\frac{y}{2}) (f_y -\frac{x}{2} )f_{xy}+(1+(f_{x}+\frac{y}{2})^{2})f_{yy}] \\
&\quad [(2 f_x+y)( (x-2 f_y)^{2}f_{xx}+2 (2 f_x+y)(x-2 f_y)f_{xy}+  \\
&\quad(4+(2f_{x}+y)^{2})f_{yy})+(4 f_{xy}-6)(x-2 f_y)]/32w^{4}
\end{split}
\end{equation*}

Substituting the mean curvature (\ref{hec})  in the above equation gives

\begin{equation*}
\begin{split}
\tau (\phi^{1})+2(wH)_{x} &= 8Hw^{3}[(2 f_x+y)( (x-2 f_y)^{2}f_{xx}+2 (2 f_x+y)(x-2 f_y)f_{xy}+\\ &\quad(4+(2f_{x}+y)^{2})f_{yy}) +(4 f_{xy}-6)(x-2 f_y)]/32w^{4}
\end{split}
\end{equation*}

\noindent Replacing again  the mean curvature in the above equation, we get

$$\tau (\phi^{1})+2(wH)_{x}=\frac{H}{4w}[-4(x-2f_y)-(4+(2f_y-x)^2+(y+2f_x)^2)_{x}+8(y+2f_x)Hw^{3}]$$
Since $4w^2=4+(2f_y-x)^2+(y+2f_x)^2$  we have
$$\tau (\phi^{1})+2(wH)_{x}=\frac{H}{4w}[-4(x-2f_y)-(4w^{2})_x+8(y+2f_x)Hw^{3}]$$
Finally using the equality $2(wH)_x=2w_xH+2wH_x$ we obtain the desired formula.

We now apply (\ref{T2Hv}) from Appendix \ref{appendix:a} and the above argument to obtain the second formula of the theorem, which completes the proof.

\end{proof}

  Let us mention an important consequence of the above theorem.
 \begin{corollary}
 If $S\subset \mathcal{H}_{3}$ is a minimal surface, then the Gauss map of $S$ is harmonic.
 \end{corollary}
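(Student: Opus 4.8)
The plan is to derive the corollary directly from Theorem \ref{HG}, using the standard fact that a smooth map between Riemannian manifolds is harmonic precisely when its tension field vanishes identically. Since harmonicity is a local condition, I would argue pointwise, splitting $S$ into the two surface types introduced in Section 4: non-vertical points, where $S$ is locally a graph over the $xy$-plane, and vertical points, where $S$ is locally a vertical surface.

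First I would treat the case in which $S$ is locally the graph of a smooth function $f$. Minimality means $H\equiv 0$, so in particular $H_x=H_y=0$. Feeding $H=0$ into the two identities of Theorem \ref{HG}, the right-hand sides vanish because every summand carries a factor of $H$, while the correction terms $2wH_x$ and $2wH_y$ on the left also vanish. What remains is exactly $\tau(\phi^1)=0$ and $\tau(\phi^2)=0$, i.e. $\tau(\phi)=0$, so the Gauss map is harmonic on this portion of $S$.

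Next I would dispose of the case in which $S$ is locally a vertical surface. By the mean curvature computation for vertical surfaces in Section 6, a minimal vertical surface satisfies $\ddot{a}=0$ and is therefore a piece of a vertical plane $Ax+By=C$. As noted in the discussion of the Gauss map, the image of $\phi$ for such a plane is a single point, so $\phi$ is locally constant; a constant map has vanishing tension field and is trivially harmonic.

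These two cases cover a neighborhood of every point of $S$, since a point is either non-vertical, where $S$ is locally a graph over the $xy$-plane, or vertical, where $S$ is locally a vertical surface. Because the tension field is computed locally and vanishes in each chart, it vanishes on all of $S$, and the Gauss map of $S$ is harmonic. I expect no serious obstacle here: the entire content is carried by Theorem \ref{HG}, and the only point requiring a little care is checking that the vertical locus contributes nothing new, which the reduction to a vertical plane settles.
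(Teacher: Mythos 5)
Your two cases coincide with the paper's own proof: for a graph, setting $H\equiv 0$ (hence $H_{x}=H_{y}=0$) in Theorem \ref{HG} leaves exactly $\tau(\phi^{1})=\tau(\phi^{2})=0$, and for a vertical surface minimality forces $\ddot{a}=0$, so the surface is a vertical plane whose Gauss map is constant, hence harmonic.

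The gluing step you add on top of this, however, rests on a false dichotomy. At a non-vertical point the surface is indeed locally a graph over the $xy$-plane, but at a vertical point it need \emph{not} be locally a vertical surface in the sense of Section 4, i.e.\ a cylinder $X(t,s)=(t,a(t),s)$ ruled by vertical lines: the tangent plane can contain $E_{3}$ at a point, or along a curve, without the surface containing any vertical segment near it. A concrete minimal example is the helicoid $\{(r\cos\theta,\,r\sin\theta,\,c\theta): r,\theta\in\mathbb{R}\}$ with $c\neq 0$, which satisfies the minimal graph equation (\ref{minec}) away from the $z$-axis; its vertical points form exactly the $z$-axis, and near them it is neither a graph over the $xy$-plane nor a vertical cylinder. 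At such points your case analysis covers nothing, and in fact the map $\phi=\psi\circ\gamma$ into the Gans model is not even defined there, since the unit normal is horizontal and $\psi$ blows up on the equator of $S^{2}$. The paper sidesteps this by proving the corollary only for the two classes of surfaces introduced in Section 4 --- graphs and vertical surfaces --- which is how the statement has to be read. Restricted to those classes your argument is complete and identical to the paper's; but the claim that your two cases cover a neighborhood of every point of an arbitrary minimal surface is wrong, and Theorem \ref{HG} alone cannot repair it.
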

 \begin{proof}

 If $S$ is the graph of a smooth function $f:\Omega\rightarrow \mathbb{R}$ with $H=0$. We conclude from Theorem \ref{HG} that $\tau(\phi)=0$,
 and, in consequence, $\phi$ is harmonic.

 If $S$ is a minimal vertical surface, then $S$ is a vertical plane and its Gauss map is constant, which implies the Gauss map is harmonic.
 \end{proof}

The above corollary together with the next result, will allow us to study the minimal surfaces in $\mathcal{H}_{3}$
\begin{theorem}
Let $M$ and $N$ two riemannian manifolds such that $M$ is connected. If $F:M \rightarrow N$ is harmonic and $\rd f$ has rank $1$ in an open set, then
$F$ maps $M$ into a geodesic arc in $N$
\end{theorem}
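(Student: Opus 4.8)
The plan is to use the rank-one hypothesis to factor $F$ locally as a composition through a curve, and then to extract the geodesic condition directly from the vanishing of the tension field. Let $U\subseteq M$ be the open set on which $\rd F$ has rank $1$, and fix $p_{0}\in U$. Shrinking $U$ around $p_{0}$ if necessary, the image $F(U)$ is a $1$-dimensional immersed submanifold of $N$, so I can write $F|_{U}=\gamma\circ h$, where $\gamma:I\to N$ is a curve parametrized by arc length and $h:U\to I\subseteq\mathbb{R}$ is smooth with $\rd h$ nowhere vanishing. This factorization \emph{is} the rank-one condition, since $\rd F_{p}(X)=\rd h_{p}(X)\,\gamma'(h(p))$ has image spanned by the single vector $\gamma'(h(p))$.

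For such a composition the tension field obeys the standard composition rule (with the tension field as defined above; see \cite{eells1978report})
\[
\tau(F)=(\triangle_{M}h)\,\gamma'(h)+|\rd h|_{g}^{2}\,\nabla_{\gamma'}\gamma'(h),
\]
where $\triangle_{M}$ is the Laplacian of $(M,g)$, $|\rd h|_{g}^{2}=g^{ij}h_{i}h_{j}$, and $\nabla_{\gamma'}\gamma'$ is the covariant acceleration of $\gamma$. Since $\gamma$ has unit speed, differentiating $\langle\gamma',\gamma'\rangle=1$ shows that $\gamma'$ and $\nabla_{\gamma'}\gamma'$ are orthogonal in $T_{F(p)}N$. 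Harmonicity gives $\tau(F)=0$, and because the two summands lie in orthogonal directions they must vanish separately. As $\gamma'\neq 0$ and $|\rd h|_{g}^{2}>0$ throughout $U$, I conclude that $\triangle_{M}h=0$ and, crucially, that $\nabla_{\gamma'}\gamma'=0$; the latter says precisely that $\gamma$ is a geodesic. Hence $F(U)$ lies on a geodesic arc $\Gamma$ of $N$.

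It remains to propagate this conclusion from $U$ to all of $M$, and this is where I expect the main difficulty to lie, and where the connectedness of $M$ enters. The mechanism is analyticity: harmonic maps between real-analytic manifolds are real-analytic, so $F$ is analytic (in the paper's application $S$ and $\mathcal{P}$ are real-analytic, so this applies). The condition ``$F(p)\in\Gamma$'' is the vanishing of the normal Fermi coordinates of $F$ along $\Gamma$, an analytic condition holding on the open set $U$; by the identity principle on the connected manifold $M$ — equivalently, by Sampson's unique continuation theorem for harmonic maps — this vanishing spreads to all of $M$, giving $F(M)\subseteq\Gamma$. The delicate point is that a geodesic need not be embedded or closed, so the propagation has to be organized along a chain of overlapping tubular (Fermi) neighborhoods, extending $\Gamma$ as one advances through $M$; making this covering argument rigorous, rather than the local computation, is the real content of the theorem.
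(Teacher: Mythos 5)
Your local computation is correct, and it is the standard one: on the open set $U$ where $\rd F$ has rank $1$, the constant rank theorem gives the factorization $F|_{U}=\gamma\circ h$ with $\gamma$ unit speed, the composition law $\tau(F)=(\triangle_{M}h)\,\gamma'+|\rd h|^{2}\,\nabla_{\gamma'}\gamma'$ holds, and since $\nabla_{\gamma'}\gamma'\perp\gamma'$ for a unit-speed curve, both summands must vanish separately; together with $|\rd h|^{2}>0$ this forces $\nabla_{\gamma'}\gamma'=0$, so $F(U)$ lies on a geodesic arc $\Gamma$. You should know, however, that the paper offers no argument to compare against: its entire proof is the sentence referring the reader to Sampson \cite{sampson1978some}. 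So the benchmark is Sampson's proof, and the question is whether your globalization step stands on its own.

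It does not, and you partly concede this. The genuine gap is the passage from $F(U)\subseteq\Gamma$ to $F(M)\subseteq\Gamma$. First, your mechanism --- real-analyticity of $F$ plus the identity principle --- is unavailable at the theorem's stated level of generality: $M$ and $N$ are arbitrary Riemannian manifolds, and harmonicity guarantees real-analyticity of $F$ only when both metrics are real-analytic; observing that the paper's application is analytic rescues the application, not the theorem. Second, even granting analyticity, you explicitly leave the chain-of-Fermi-neighborhoods argument undone, and that is precisely the content that must be supplied: ``$F(p)\in\Gamma$'' is not the zero locus of globally defined functions, $\Gamma$ need not be embedded or complete, and the open-and-closed argument along chains of tubular neighborhoods is where the work lies. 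The tool that closes this for merely smooth metrics is Aronszajn-type unique continuation: in Fermi coordinates along $\Gamma$ (a totally geodesic submanifold) the normal components $u$ of $F$ satisfy a differential inequality of the form $|\triangle u|\le C\left(|u|+|\rd u|\right)$, because the relevant Christoffel symbols of $N$ vanish along $\Gamma$; hence vanishing of $u$ on an open set propagates over the connected manifold $M$ by chaining these local estimates. That is exactly the unique continuation theory Sampson develops and uses to prove this theorem in the reference the paper cites. So your proposal is right locally, but its global step is either restricted to the analytic case or deferred to the very result of Sampson that constitutes the paper's proof; to be self-contained it would need the Aronszajn/Carleman argument spelled out.
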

\begin{proof}For the proof we refer the reader to  \cite{sampson1978some}
\end{proof}

\subsection{Minimal graphs of rank 1}
Since the only  minimal vertical surface is a vertical plane and its Gauss map is constant we consider only  minimal graphs whose Gauss map  has rank 1.  Let $S$ be a graph of a smooth function $f:\Omega \rightarrow \mathbb{R}$ where $\Omega $ is a domain of $\mathbb{R}^{2}$. We consider the following parametrization of $S,$

\begin{equation}\label{domain}
   X\left( x,y\right) =( x,y,f( x,y)),(x,y)\in \Omega.
\end{equation}
and $\phi:\Omega\rightarrow \mathcal{P}$ its Gauss map, see $(\ref{gmap})$. Since $\phi$ is harmonic, it follows that $\phi(\Omega)$ is a geodesic in $\mathcal{P}$.
From this we have two cases:

\begin{description}
  \item[$\phi(\Omega)$ is a straight line going through the origin] . We can assume, by rotating an suitable angle in  $\mathcal{H}_{3}$, that
$$f_{y}-x/2=0$$
in $\Omega$. Thus, $f(x,y)=xy/2 +h(x)$. Substituting into the minimal graph equation $(\ref{minec})$ we obtain $h''(x)=0$. Therefore
$$f(x,y)=\frac{xy}{2}+kx+c.
$$
  \item[$\phi(\Omega)$ is a branch of a hyperbola.] As in the previous case, we can assume, by rotating an suitable angle  in  $\mathcal{H}_{3}$, that $\phi(\Omega)$ is a branch of the following hyperbola,
$$\frac{(f_{y}-x/2)^{2}}{a^{2}}-\frac{(f_{x}+y/2)^{2}}{b^{2}}=1$$
such that, $(f_{x}+y/2)(0,0)=0$ and $(f_{y}-x/2)(0,0)\neq 0$.
Differentiating the hyperbola equation with respect to $y$ and evaluating at $(0,0)$ we obtain $f_{yy}(0,0)=0$. From this we conclude that
$$f(x,y)=\frac{xy}{2}+k[\ln (y+\sqrt{1+y^{2}})+y\sqrt{1+y^{2}}]+c$$
This can be found in lemma $(14)$ of \cite{figueroa2012gauss}.
\end{description}

\subsection{Minimal graphs of rank 2}
In this section, we will characterize the minimal graphs in $\mathcal{H}_{3}$ whose Gauss map is conformal. More precisely

\begin{theorem}
  Let $S\subset \mathcal{H}_{3}$ be a minimal graph. $S$ is a plane iff its Gauss map is conformal.
\end{theorem}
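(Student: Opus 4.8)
The plan is to recast conformality of the Gauss map as a single matrix condition and then let minimality collapse it. Write $p = f_x + \tfrac{y}{2}$, $q = f_y - \tfrac{x}{2}$, $w^2 = 1+p^2+q^2$, let $H = \begin{pmatrix} f_{xx} & f_{xy} \\ f_{xy} & f_{yy}\end{pmatrix}$ be the Hessian, and set $A = \begin{pmatrix} 0 & \tfrac12 \\ -\tfrac12 & 0\end{pmatrix}$, so that $(\ref{jacob})$ reads $\rd\phi = -(H+A)$. The forward implication is immediate: for a plane $H = 0$, so $\rd\phi = -A$ and a one-line computation gives $\phi^* h = \tfrac{1}{4w^2}\,g$, which is conformal. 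So I would spend the argument on the converse.

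The key observation I would isolate first is that, as symmetric matrices, the Gans metric $h$ evaluated at the image point $\phi(x,y) = (-p,-q)$ is exactly the inverse of the first fundamental form $(\ref{1ffund})$. Indeed $g = \begin{pmatrix} 1+p^2 & pq \\ pq & 1+q^2\end{pmatrix}$ has determinant $w^2$, and $g^{-1} = \tfrac{1}{w^2}\begin{pmatrix} 1+q^2 & -pq \\ -pq & 1+p^2\end{pmatrix}$ is precisely the matrix one reads off from the definition of $h$ at $(-p,-q)$. With $h\circ\phi = g^{-1}$ in hand, the pullback metric simplifies to $\phi^*h = (\rd\phi)^T g^{-1}(\rd\phi) = (H-A)\,g^{-1}(H+A)$, and therefore $g^{-1}\phi^*h = (P-Q)(P+Q)$ where $P = g^{-1}H$ and $Q = g^{-1}A$.

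Next I would separate scalar and trace-free parts. Both $P$ and $Q$ are trace-free: $Q$ because $g^{-1}$ is symmetric and $A$ antisymmetric, and $P$ exactly by minimality, since $\mathrm{tr}(g^{-1}H) = \tfrac{1}{w^2}\big[(1+q^2)f_{xx} - 2pq\,f_{xy} + (1+p^2)f_{yy}\big]$ vanishes by $(\ref{minec})$. Cayley--Hamilton then gives $P^2 = -\det(P)\,I$ and $Q^2 = -\det(Q)\,I$, so $g^{-1}\phi^*h = \tfrac{1/4 - \det H}{w^2}\,I + [P,Q]$. As a commutator is trace-free, the conformality requirement $g^{-1}\phi^*h = \lambda I$ is equivalent to the single condition $[P,Q]=0$ (with $\lambda = \tfrac{1/4-\det H}{w^2}$ forced).

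Finally I would extract $H=0$ purely algebraically. The matrix $Q = g^{-1}A$ is trace-free and nonzero, hence non-scalar, so its centralizer is $\mathrm{span}\{I,Q\}$; since $P$ commutes with $Q$ and is trace-free, $P = bQ$ for a scalar $b$, i.e. $H = bA$. But $H$ is symmetric and $A$ antisymmetric, so $H = bA$ can only be the zero matrix; thus $f_{xx}=f_{xy}=f_{yy}=0$, $f$ is affine, and $S$ is a plane. I expect the main obstacle to be establishing the identity $h\circ\phi = g^{-1}$ in clean invariant form rather than through the brute polynomial expansion of the conformality conditions (it is exactly this identity that turns an unwieldy system into the transparent equation $[P,Q]=0$); a secondary point needing care is the linear algebra of commuting trace-free $2\times2$ matrices, where I must confirm $Q$ is non-scalar and that even a nilpotent Hessian cannot evade the proportionality step.
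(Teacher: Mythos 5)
Your proof is correct, and it takes a genuinely different route from the paper's. The paper works in bare coordinates: it writes conformality as equality of the two diagonal ratios $\lambda=\langle\rd\phi(\partial_x),\rd\phi(\partial_x)\rangle/E=\langle\rd\phi(\partial_y),\rd\phi(\partial_y)\rangle/G$ plus the off-diagonal condition, expands everything using (\ref{jacob}) and the Gans metric, and eliminates with the minimal equation (\ref{minec}) three times in succession, obtaining first $f_{xy}=0$, then $f_{xx}=0$, then $f_{yy}=0$; its converse direction only checks the single plane $z=0$. You replace this elimination by structure: the identity $h\circ\phi=g^{-1}$ (the Gans metric evaluated at the Gauss image equals the inverse of the first fundamental form (\ref{1ffund})), which the paper never observes, turns the pullback into $g^{-1}\phi^{*}h=(P-Q)(P+Q)$ with $P=g^{-1}\,\mathrm{Hess}f$ and $Q=g^{-1}A$; minimality is exactly $\mathrm{tr}\,P=0$, Cayley--Hamilton isolates the scalar part, and conformality becomes the single equation $[P,Q]=0$, which the centralizer of the non-scalar matrix $Q$ together with the symmetric-versus-antisymmetric clash kills, giving $\mathrm{Hess}f=0$. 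I verified the key identity ($u=-p$, $v=-q$ gives $h\circ\phi=\tfrac{1}{w^{2}}\bigl(\begin{smallmatrix}1+q^{2} & -pq\\ -pq & 1+p^{2}\end{smallmatrix}\bigr)=g^{-1}$), the trace computations, and the centralizer step; all are sound, and your worry about a nilpotent Hessian is moot since the centralizer argument only needs $Q$ non-scalar, which follows from $\mathrm{tr}\,Q=0$, $\det Q=\tfrac{1}{4w^{2}}\neq 0$. What the paper's computation buys is elementarity and consonance with its Mathematica-driven style; what yours buys is an explanation of why the polynomial identities collapse, a single equation packaging both conformality conditions, the explicit conformal factor $\lambda=(\tfrac14-\det\mathrm{Hess}f)/w^{2}$, and a forward direction that covers every affine graph at once rather than only $z=0$ (the paper would strictly need to invoke its left-translation theorem of Section 5 to upgrade its check). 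One cosmetic caution: your symbol $H$ for the Hessian collides with the paper's $H$ for mean curvature, so it should be renamed if merged into the text.
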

\begin{proof}
Let $S$ parameterized as in $(\ref{domain})$. By  (\ref{gmap}), the Gauss map of $S$ is given by $\phi(x,y)=\left(-p(x,y),-q(x,y)\right),$
where
$$p(x,y)=(f_{x}+\frac{y}{2}),\;\;\;q(x,y)=(f_{y}-\frac{x}{2}).$$
  Consider $\phi$ conformal, that is, $\rd\phi_{(x,y)}\neq 0$ for all $(x,y)\in \Omega$ and
  $$<\rd \phi(\textbf{u}),\rd \phi(\textbf{v})>_{\phi(x,y)}=\lambda(x,y)<\rd X (\textbf{u}),\rd X (\textbf{v})>_{X(x,y)},$$
  for all tangents vector $\textbf{u,v}$ at $(x,y)\in \Omega$ and $\lambda>0$. In particular, if $\{\frac{\partial}{\partial x},\frac{\partial}{\partial y}\}$ is the canonical basis on $\Omega $, then
  $$\lambda(x,y)=\frac{<\rd \phi(\frac{\partial}{\partial x}),\rd \phi(\frac{\partial}{\partial x})>}{<X_{x},X_{x}>}=\frac{<\rd \phi(\frac{\partial}{\partial y}),\rd \phi(\frac{\partial}{\partial y})>}{<X_{y},X_{y}>}$$
  where $\rd\phi(\frac{\partial}{\partial x})$ and $\rd \phi(\frac{\partial}{\partial y})$ are nothing but the columns of $\rd \phi_{(x,y)}$, see (\ref{jacob}).
  Combining this equation with the minimal graph equation $(\ref{minec})$, we obtain
  $$pq\left((1+q^{2})f_{xx}+(1+p^{2})f_{yy}\right)-2(1+q^{2})(1+p^{2})f_{xy}=0$$
  Applying (\ref{minec}) again, we conclude that
$$2(1+p^{2}+q^{2})f_{xy}=0,$$
that is,  $f_{xy}=0$ on  $\Omega$.

In the same manner we can see that

$$<\rd \phi(\frac{\partial}{\partial x}),\rd \phi(\frac{\partial}{\partial x})><X_{x},X_{y}>=<\rd \phi(\frac{\partial}{\partial x}),\rd \phi(\frac{\partial}{\partial y})><X_{x},X_{x}>,$$
 Applying the minimal graph equation $(\ref{minec})$ and considering that $f_{xy}=0$, we obtain,
$$(1+p^{2}+q^{2})f_{xx}=0.$$
 That is, $f_{xx}=0$ and by  $(\ref{minec})$ it is obvious that $f_{yy}=0$ on $\Omega$.

On the contrary, if $S$ is the plane $z=0$. We consider a parametrization $X(x,y)=(x,y,0)$ and its   Gauss map is given by
$\phi(x,y)=(-y/2,x/2)$. It is easy to check that $\phi$ is conformal.

\end{proof}

\appendix
\section{Tension field}
\label{appendix:a}

This is a Mathematica program to compute the Tension Field of the Gauss map $\phi:S\rightarrow \mathcal{P}$ where $S$ is the graph of a function $f:\Omega\rightarrow \mathbb{R}$ and $\mathcal{P}$ is the Gans model of the hyperbolic plane.

We need the partial derivatives of $f$ up to order three:

\begin{doublespace}
\noindent\(\pmb{\text{Clear}[u,v,x,y]}\)
\end{doublespace}

\begin{doublespace}
\noindent\(\pmb{f[\text{u$\_$},\text{v$\_$}];f_x[\text{u$\_$},\text{v$\_$}];f_y[\text{u$\_$},\text{v$\_$}];f_{\text{xx}}[\text{u$\_$},\text{v$\_$}];f_{\text{xy}}[\text{u$\_$},\text{v$\_$}];f_{\text{yx}}[\text{u$\_$},\text{v$\_$}];}\\
\pmb{f_{\text{yy}}[\text{u$\_$},\text{v$\_$}];f_{\text{xyy}}[\text{u$\_$},\text{v$\_$}];f_{\text{yyy}}[\text{u$\_$},\text{v$\_$}];f_{\text{xxx}}[\text{u$\_$},\text{v$\_$}];f_{\text{xxy}}[\text{u$\_$},\text{v$\_$}];}\)
\end{doublespace}

\begin{doublespace}
\noindent\(\pmb{\text{Derivative}[1,0][f][u,v]\text{:=}f_x[u,v]}\)
\end{doublespace}

\begin{doublespace}
\noindent\(\pmb{\text{Derivative}[0,1][f][u,v]\text{:=}f_y[u,v]}\)
\end{doublespace}

\begin{doublespace}
\noindent\(\pmb{\text{Derivative}[1,0]\left[f_x\right][u,v]\text{:=}f_{\text{xx}}[u,v]}\)
\end{doublespace}

\begin{doublespace}
\noindent\(\pmb{\text{Derivative}[0,1]\left[f_x\right][u,v]\text{:=}f_{\text{xy}}[u,v]}\)
\end{doublespace}

\begin{doublespace}
\noindent\(\pmb{\text{Derivative}[1,0]\left[f_y\right][u,v]\text{:=}f_{\text{xy}}[u,v]}\)
\end{doublespace}

\begin{doublespace}
\noindent\(\pmb{\text{Derivative}[0,1]\left[f_y\right][u,v]\text{:=}f_{\text{yy}}[u,v]}\)
\end{doublespace}

\begin{doublespace}
\noindent\(\pmb{\text{Derivative}[0,1]\left[f_{\text{xy}}\right][u,v]\text{:=}f_{\text{xyy}}}\)
\end{doublespace}

\begin{doublespace}
\noindent\(\pmb{\text{Derivative}[1,0]\left[f_{\text{yy}}\right][u,v]\text{:=}f_{\text{xyy}}}\)
\end{doublespace}

\begin{doublespace}
\noindent\(\pmb{\text{Derivative}[0,1]\left[f_{\text{yy}}\right][u,v]\text{:=}f_{\text{yyy}}}\)
\end{doublespace}

\begin{doublespace}
\noindent\(\pmb{\text{Derivative}[1,0]\left[f_{\text{xx}}\right][u,v]\text{:=}f_{\text{xxx}}}\)
\end{doublespace}

\begin{doublespace}
\noindent\(\pmb{\text{Derivative}[0,1]\left[f_{\text{xx}}\right][u,v]\text{:=}f_{\text{xxy}}}\)
\end{doublespace}

\begin{doublespace}
\noindent\(\pmb{\text{Derivative}[1,0]\left[f_{\text{xy}}\right][u,v]\text{:=}f_{\text{xxy}}}\)
\end{doublespace}

\begin{doublespace}
\noindent\(\pmb{X[\text{u$\_$},\text{v$\_$}]\text{:=}\{u,v,f[u,v]\}}\)
\end{doublespace}

Let $\{$\(X_x\),\(X_y\)$\}$ be the basis of the tangent space \(T_pS\) associated to the parametrization $X$. The components of \(X_x\) and
\(X_y\) with respect to the orthonormal basis of the Heisenberg group, ${E_i}$ with ${i=1,2,3}$, are:

\begin{doublespace}
\noindent\(\pmb{X_x=\left\{1,0,f_x[u,v]+\frac{v}{2}\right\}}\)
\end{doublespace}

\begin{doublespace}
\noindent\(\left\{1,0,\frac{v}{2}+f_x[u,v]\right\}\)
\end{doublespace}

\begin{doublespace}
\noindent\(\pmb{X_y=\left\{0,1,f_y[u,v]-\frac{u}{2}\right\}}\)
\end{doublespace}

\begin{doublespace}
\noindent\(\left\{0,1,-\frac{u}{2}+f_y[u,v]\right\}\)
\end{doublespace}

\noindent The normal field to the surface $S$ is given by

\begin{doublespace}
\noindent\(\pmb{\text{N1}=\text{Cross}\left[X_x,X_y\right]}\)
\end{doublespace}

\begin{doublespace}
\noindent\(\left\{-\frac{v}{2}-f_x[u,v],\frac{u}{2}-f_y[u,v],1\right\}\)
\end{doublespace}

\noindent The Gauss map $\phi $:S$\rightarrow \mathcal{P}$ is given by:

\begin{doublespace}
\noindent\(\pmb{\text{$\phi $1}[\text{u$\_$},\text{v$\_$}]\text{:=}-\left(\frac{v}{2}+f_x[u,v]\right)}\)
\end{doublespace}

\begin{doublespace}
\noindent\(\pmb{\text{$\phi $2}[\text{u$\_$},\text{v$\_$}]\text{:=}-\left(f_y[u,v]-\frac{u}{2}\right)}\)
\end{doublespace}

\begin{doublespace}
\noindent\(\pmb{\phi =\{\text{$\phi $1}[u,v],\text{$\phi $2}[u,v]\}}\)
\end{doublespace}

\begin{doublespace}
\noindent\(\left\{-\frac{v}{2}-f_x[u,v],\frac{u}{2}-f_y[u,v]\right\}\)
\end{doublespace}

\noindent The coefficients of the first fundamental form of the graph of $f$ are:

\begin{doublespace}
\noindent\(\pmb{\text{g11}=\text{Dot}\left[X_x,X_x\right]}\)
\end{doublespace}

\begin{doublespace}
\noindent\(1+\left(\frac{v}{2}+f_x[u,v]\right){}^2\)
\end{doublespace}

\begin{doublespace}
\noindent\(\pmb{\text{g12}=\text{Dot}\left[X_x,X_y\right]}\)
\end{doublespace}

\begin{doublespace}
\noindent\(\left(\frac{v}{2}+f_x[u,v]\right) \left(-\frac{u}{2}+f_y[u,v]\right)\)
\end{doublespace}

\begin{doublespace}
\noindent\(\pmb{\text{g21}=\text{Dot}\left[X_y,X_x\right]}\)
\end{doublespace}

\begin{doublespace}
\noindent\(\left(\frac{v}{2}+f_x[u,v]\right) \left(-\frac{u}{2}+f_y[u,v]\right)\)
\end{doublespace}

\begin{doublespace}
\noindent\(\pmb{\text{g22}=\text{Dot}\left[X_y,X_y\right]}\)
\end{doublespace}

\begin{doublespace}
\noindent\(1+\left(-\frac{u}{2}+f_y[u,v]\right){}^2\)
\end{doublespace}

\noindent This is a \textit{ Mathematica} program to compute the Riemannian Laplacian of the Gauss map. The Laplacian of a differential function $h:S\rightarrow \mathbb{R}$
is calculated by the formula

$$\Delta h=\frac{1}{\sqrt{\det(g)}}\sum _{i,j=2}^2 \partial _{x_i}\left(\sqrt{\det(g)}g^{ij} \partial _{x^j}h\right)$$

\noindent where $(g^{ij})$ is the inverse matrix of the metric $(g_{ij})$.

\begin{doublespace}
\noindent\(\pmb{\text{Clear}[\text{coord}, \text{metric},\text{inversemetric}]}\)
\end{doublespace}

\begin{doublespace}
\noindent\(\pmb{\pmb{\text{coord} = \{u,v\};}}\)
\end{doublespace}

\noindent We input the metric of the surface $S$ as a matrix.

\begin{doublespace}
\noindent\(\pmb{\text{metric}=\{\{\text{g11},\text{g21}\},\{\text{g12},\text{g22}\}\};}\)
\end{doublespace}

\noindent In matrix form:

\begin{doublespace}
\noindent\(\pmb{\text{metric}\text{//}\text{MatrixForm};}\)
\end{doublespace}

\noindent The inverse metric is obtained through matrix inversion.

\begin{doublespace}
\noindent\(\pmb{\text{inversemetric}=\text{Inverse}[\text{metric}];}\)
\end{doublespace}

\noindent The inverse metric can also be displayed in matrix form:

\begin{doublespace}
\noindent\(\pmb{\text{inversemetric}\text{//}\text{MatrixForm};}\)
\end{doublespace}

\noindent The Laplacian of the first component of the map Gauss, $\phi ^1$

\begin{doublespace}
\noindent\(\pmb{\text{LA1}=\text{Simplify}\left[\frac{1}{\sqrt{\text{Det}[\text{metric}]}}*\text{Sum}\left[D\left[\sqrt{\text{Det}[\text{metric}]}*\text{inversemetric}[[i,j]]*D[\phi
[[1]],\right.\right.\right.}\\
\pmb{\text{coord}[[i]] ],\text{coord}[[j]]],\{i,1,2\},\{j,1,2\}]];}\)
\end{doublespace}

\noindent The Laplacian of the second component of the map Gauss, $\phi ^2$

\begin{doublespace}
\noindent\(\pmb{\text{LA2}=\text{Simplify}\left[\frac{1}{\sqrt{\text{Det}[\text{metric}]}}*\text{Sum}\left[D\left[\sqrt{\text{Det}[\text{metric}]}*\text{inversemetric}[[i,j]]*D[\phi
[[2]],\right.\right.\right.}\\
\pmb{\text{coord}[[i]] ],\text{coord}[[j]]],\{i,1,2\},\{j,1,2\}]];}\)
\end{doublespace}

\noindent This is a \textit{ Mathematica} program to compute the Christoffel symbols and was adapted from the notebook \textit{ Curvature and the Einstein }equation  written by \textit{ Leonard Parker }, see \cite{hartle2003gravity}. The Christoffel symbols are calculated by the formula

$$\Gamma ^{\lambda }_{\mu \nu }=\frac{1}{2}g^{\lambda \sigma }\left(\partial _{\mu } g_{\sigma \nu }+\partial _{\nu } g_{\sigma \mu }-\partial
_{\sigma } g_{\mu \nu }\right)$$

\noindent where $(g^{\lambda \sigma })$ is the inverse of the matrix of the metric $g_{\lambda \sigma}$.

\begin{doublespace}
\noindent\(\pmb{\text{Clear}[\text{coord}, \text{metricgg},\text{inversemetricgg}, \text{affine},\text{  }u,v]}\)
\end{doublespace}

\begin{doublespace}
\noindent\(\pmb{\pmb{\text{coord} = \{u,v\};}}\)
\end{doublespace}

\noindent We input the metric of the Gans model of the Hyperbolic space $\mathcal{P}$ as a matrix.

\begin{doublespace}
\noindent\(\pmb{\text{metricgg}=\left\{\left\{\frac{1+v^2}{1+u^2+v^2},\frac{-u v}{1+u^2+v^2}\right\},\left\{\frac{-u v}{1+u^2+v^2},\frac{1+u^2}{1+u^2+v^2}\right\}\right\};}\)
\end{doublespace}

\noindent In matrix form:

\begin{doublespace}
\noindent\(\pmb{\text{metricgg}\text{//}\text{MatrixForm};}\)
\end{doublespace}

\noindent The inverse metric is obtained through matrix inversion.

\begin{doublespace}
\noindent\(\pmb{\text{inversemetricgg}=\text{Simplify}[\text{Inverse}[\text{metricgg}]];}\)
\end{doublespace}

\noindent The inverse metric can also be displayed in matrix form:

\begin{doublespace}
\noindent\(\pmb{\text{inversemetricgg}\text{//}\text{MatrixForm};}\)
\end{doublespace}

\noindent The calculation of the components of the affine connection is done by transcribing the definition given earlier into the notation of \textit{ Mathematica}
and using the \textit{ Mathematica} functions \pmb{ D} for taking partial derivatives, \pmb{ Sum} for summing over repeated indices, \pmb{ Table}
for forming a list of components, and \pmb{ Simplify} for simplifying the result.

\begin{doublespace}
\noindent\(\pmb{\text{affine}\text{:=}\text{affine}=\text{Simplify}[\text{Table}[(1/2)*\text{Sum}[(\text{inversemetricgg}[[i,s]])*}\\
\pmb{(D[\text{metricgg}[[s,j]],\text{coord}[[k]]]+}\pmb{D[\text{metricgg}[[s,k]],\text{coord}[[j]] ]-}\\
\pmb{ D[\text{metricgg}[[j,k]],\text{coord}[[s]] ]),\{s,1,2\}],}\\
\pmb{\{i,1,2\},\{j,1,2\},\{k,1,2\}] ]}\)
\end{doublespace}

\noindent The components of the affine connections of the Gans model are displayed below. Because the affine connection is symmetric under interchange of the
last two indices, only the independent components are displayed.

\begin{doublespace}
\noindent\(\pmb{\text{listaffine}\text{:=}\text{Table}[\text{If}[\text{UnsameQ}[\text{affine}[[i,j,k]],0],\{\text{ToString}[\Gamma [i,j,k]],\text{affine}[[i,j,k]]\}]
,}\\
\pmb{\{i,1,2\},\{j,1,2\},\{k,1,j\}]}\)
\end{doublespace}

\begin{doublespace}
\noindent\(\pmb{\text{TableForm}[\text{Partition}[\text{DeleteCases}[\text{Flatten}[\text{listaffine}],\text{Null}],2],\text{TableSpacing}\to \{2,2\}]}\)
\end{doublespace}

\begin{doublespace}
\noindent\(\begin{array}{ll}
 \text{$\Gamma $[1, 1, 1]} & -\frac{u \left(1+v^2\right)}{1+u^2+v^2} \\
 \text{$\Gamma $[1, 2, 1]} & \frac{u^2 v}{1+u^2+v^2} \\
 \text{$\Gamma $[1, 2, 2]} & -\frac{u+u^3}{1+u^2+v^2} \\
 \text{$\Gamma $[2, 1, 1]} & -\frac{v+v^3}{1+u^2+v^2} \\
 \text{$\Gamma $[2, 2, 1]} & \frac{u v^2}{1+u^2+v^2} \\
 \text{$\Gamma $[2, 2, 2]} & -\frac{\left(1+u^2\right) v}{1+u^2+v^2} \\
\end{array}\)
\end{doublespace}

\noindent In matrix form:

\begin{doublespace}
\noindent\(\pmb{ \text{l1}=\{\{\text{affine}[[1,1,1]],\text{affine}[[1,1,2]]\},\{\text{affine}[[1,2,1]],\text{affine}[[1,2,2]]\}\};}\)
\end{doublespace}

\begin{doublespace}
\noindent\(\pmb{ \text{l2}=\{\{\text{affine}[[2,1,1]],\text{affine}[[2,1,2]]\},\{\text{affine}[[2,2,1]],\text{affine}[[2,2,2]]\}\};}\)
\end{doublespace}

\noindent This is a \textit{ Mathematica} program to compute the Tension field of the Gauss map. The tension field is calculated by the formula

$$\tau ( \phi ^{\alpha })=\Delta \phi ^{\alpha }+
\sum _{i,j=1}^2 \sum _{\beta ,\gamma =1}^2 \Gamma _{\beta \gamma }^{\alpha }\frac{\partial \phi ^{\beta }}{\partial i}\frac{\partial \phi ^{\gamma }}{\partial j}g^{ij}$$

Where $\Delta\phi^{\alpha}$ is the S-Laplacian of the component $\phi^{\alpha}$, $\Gamma_{\beta\gamma}^{\alpha}$ the Christoffel symbols of the Hyperbolic space evaluate at the Gauss map, $\frac{\partial \phi ^{\beta }}{\partial 1}$ is the u-derivative of $\phi ^{\beta }$ and $\frac{\partial \phi ^{\beta }}{\partial 2},$ the v-derivative of $\phi ^{\beta }.$

\noindent First, we evaluate the Christoffel symbols of the Gans model at the Gauss map $\phi $:

\begin{doublespace}
\noindent\(\pmb{\text{AFF1}=\text{l1}\text{/.}\{u\to \phi [[1]],v\to \phi [[2]]\};}\)
\end{doublespace}

\begin{doublespace}
\noindent\(\pmb{\text{AFF2}=\text{l2}\text{/.}\{u\to \phi [[1]],v\to \phi [[2]]\};}\)
\end{doublespace}

\noindent Let D1=\(\sum _{i,j=1}^2 \sum _{\beta ,\gamma =1}^2 \Gamma _{\beta \gamma }^1\)\(\partial _i\phi ^{\beta }\)\(\partial _j\phi ^{\gamma }\)\(g^{\text{ij}}\)

\begin{doublespace}
\noindent\(\pmb{\text{D1}=\text{Sum}[\text{inversemetric}[[i,j]]*\text{AFF1}[[k,l]] * D[\phi [[k]],\text{coord}[[i]]]*D[\phi [[l]],}\\
\pmb{\text{coord}[[j]]],\{i,1,2\},\{j,1,2\},\{k,1,2\},\{l,1,2\}];}\)
\end{doublespace}

\begin{doublespace}
\noindent\(\pmb{\text{DD1}=\text{Simplify}[\text{D1}]; }\)
\end{doublespace}

\noindent Let D2=\(\sum _{i,j=1}^2 \sum _{\beta ,\gamma =1}^2 \Gamma _{\beta \gamma }^2\)\(\partial _i\phi ^{\beta }\)\(\partial _j\phi ^{\gamma }\)\(g^{\text{ij}}\)

\begin{doublespace}
\noindent\(\pmb{\text{D2}=\text{Sum}[\text{inversemetric}[[i,j]]*\text{AFF2}[[k,l]] * D[\phi [[k]],\text{coord}[[i]]]*D[\phi [[l]],}\\
\pmb{\text{coord}[[j]]],\{i,1,2\},\{j,1,2\},\{k,1,2\},\{l,1,2\}];}\)
\end{doublespace}

\begin{doublespace}
\noindent\(\pmb{\text{DD2}=\text{Simplify}[\text{D2}];}\)
\end{doublespace}

\noindent The first component of the tension field of the map Gauss, $\tau (\phi ^1 )$ is given by

\begin{doublespace}
\noindent\(\pmb{\text{Ten1}=\text{Simplify}[\text{LA1}+\text{DD1}];}\)
\end{doublespace}

\noindent In the same manner we can see the second component of the tension field of the Gauss map, $\tau (\phi ^2)$ is given by

\begin{doublespace}
\noindent\(\pmb{\text{Ten2}=\text{Simplify}[\text{LA2}+\text{DD2}];}\)
\end{doublespace}

\noindent The x-derivative of the mean curvature equation is:

\begin{doublespace}
\noindent\(\pmb{\text{MU}=D\left[\left(\left(1+\left(f_y[u,v]-\frac{u}{2}\right){}^2\right)f_{\text{xx}}[u,v]-2\left(f_x[u,v]+\frac{v}{2}\right)\left(f_y[u,v]-\frac{u}{2}\right)f_{\text{xy}}[u,v]+\right.\right.}\\
\pmb{\left.\left.\left(1+\left(f_x[u,v]+\frac{v}{2}\right){}^2\right)f_{\text{yy}}[u,v]\right)/\left(1+\left(f_x[u,v]+\frac{v}{2}\right){}^2+\left(f_y[u,v]-\frac{u}{2}\right){}^2\right),\{u,1\}\right];}\)
\end{doublespace}

\begin{doublespace}
\noindent\(\pmb{\text{FullSimplify}[\text{Ten1}+\text{MU}];}\)
\end{doublespace}

\noindent The simplify expression of $\tau (\phi ^1 )+2(wH)_x $ is:

\begin{doublespace}
\noindent\(\pmb{\text{ReplaceAll}\left[\%,\left\{f_x[u,v]\to f_x,f_y[u,v]\to f_y,f_{\text{xx}}[u,v]\to f_{\text{xx}},f_{\text{xy}}[u,v]\to f_{\text{xy}},\right.\right.}\\
\pmb{\left.\left. f_{\text{yy}}[u,v]\to f_{\text{yy}}\right\}\right]}\)
\end{doublespace}

The numerator of the above expression is:

\begin{doublespace}
\noindent\(\pmb{\text{Numerator}[\%]}\)
\end{doublespace}

\begin{multline}\label{T1Hu}
  [(f_{\text{xx}} (4+u^2+4 f_y(-u+f_y))+
4 f_{\text{yy}}+(v+2 f_x) (2 f_{\text{xy}} (u-2 f_y)+(v+2 f_x) f_{\text{yy}})] \\
[(-6+4 f_{\text{xy}}+(v+2 f_x) (2 (v+2 f_x) f_{\text{xy}}+f_{\text{xx}} (u-2 f_y)))(u-2 f_y)+\\
(v+2 f_x) (4+v^2+4 f_x (v+f_x)) f_{\text{yy}}]
\end{multline}

\noindent We now apply this argument again, with u replaced with v, to obtain the y-derivative of the mean curvature equation

\begin{doublespace}
\noindent\(\pmb{\text{MV}=D\left[\left(\left(1+\left(f_y[u,v]-\frac{u}{2}\right){}^2\right)f_{\text{xx}}[u,v]-2\left(f_x[u,v]+\frac{v}{2}\right)\left(f_y[u,v]-\frac{u}{2}\right)f_{\text{xy}}[u,v]+\right.\right.}\\
\pmb{\left.\left.\left(1+\left(f_x[u,v]+\frac{v}{2}\right){}^2\right)f_{\text{yy}}[u,v]\right)/\left(1+\left(f_x[u,v]+\frac{v}{2}\right){}^2+\left(f_y[u,v]-\frac{u}{2}\right){}^2\right),\{v,1\}\right];}\)
\end{doublespace}

\begin{doublespace}
\noindent\(\pmb{\text{FullSimplify}[\text{Ten2}+\text{MV}];}\)
\end{doublespace}

\noindent The simplify expression of $\tau (\phi ^2)+2(wH)_y$

\begin{doublespace}
\noindent\(\pmb{\text{ReplaceAll}\left[\%,\left\{f_x[u,v]\to f_x,f_y[u,v]\to f_y,f_{\text{xx}}[u,v]\to f_{\text{xx}},f_{\text{xy}}[u,v]\to f_{\text{xy}},\right.\right.}\\
\pmb{\left.\left. f_{\text{yy}}[u,v]\to f_{\text{yy}}\right\}\right]}\)
\end{doublespace}

The numerator of the above expression is:

\begin{doublespace}
\noindent\(\pmb{\text{Numerator}[\%]}\)
\end{doublespace}
\begin{multline}\label{T2Hv}
[-(f_{\text{xx}} (4+u^2+4 f_y (-u+f_y))+4 f_{\text{yy}}+(v+2 f_x) (2 f_{\text{xy}} (u-2
f_y)+(v+2 f_x) f_{\text{yy}})] \\
[2 v (3+2 f_{\text{xy}})+4 f_x^2 (u-2 f_y) f_{\text{yy}}+4f_x (3+f_{\text{xy}} (2+u^2+4 f_y (-u+f_y))+\\
v (u-2 f_y) f_{\text{yy}})+(u-2 f_y) (f_{\text{xx}}
(4+u^2+4 f_y (-u+f_y))+v (2 f_{\text{xy}} (u-2 f_y)+v f_{\text{yy}})))]
\end{multline}


\begin{thebibliography}{1}

\bibitem{bekkar1991exemples} M. Bekkar, \textit{Exemples de surfaces minimales dans l’espace de Heisenberg.} Rend. Sem. Fac. Sci. Univ. Cagliari \textbf{61}(2)(1991), 123-130.

\bibitem{kenmotsu1979weierstrass} Kenmotsu, Katsuei, \textit{Weierstrass formula for surfaces of prescribed mean curvature.} Mathematische Annalen \textbf{245}(2) (1979), 89--99.

\bibitem {borisenko2011surfaces}Borisenko, Alexander Andreevich and Petrov, Eugene V  \textit{Surfaces in the three-dimensional Heisenberg group on which the Gauss map has bounded Jacobian},Mathematical Notes \textbf{89}(5-6) (2011), 746--748.

\bibitem {bekkar2001systeme} M. Bekkar, \textit{SUR UN SYSTEME D’EQUATIONS AUX DERIV EES PARTIELLES DANS L’ESPACE DE HEISENBERG.}Rend. Sem. Mat. Univ. Pol. Torino \textbf{59}(3) (2001),
            177--184.

\bibitem {sari1992surfaces} Sari, M Bekkar-T,  \textit{Surfaces minimales regl{\'e}es dans l'espace de Heisenberg H 3} Rend. Sem. Mat. Univ. Pol. Torino \textbf{50} (1992), 243--254.

\bibitem {daniel2011gauss}Daniel, Beno{\^\i}t, \textit{The Gauss map of minimal surfaces in the Heisenberg group} International Mathematics Research Notices \textbf{2011} (33) (2011), 674--695.

\bibitem {gans1966new} Gans, David \textit{A new model of the hyperbolic plane} The American Mathematical Monthly \textbf{73}(3) (1966), 291--295.

\bibitem {eells1978report} Eells, James and Lemaire, Luc \textit{A report on harmonic maps} Bulletin of the London mathematical society \textbf{10} (1) (1978), 1--68

\bibitem {fernandez2009holomorphic} Fern{\'a}ndez, Isabel and Mira, Pablo \textit{Holomorphic quadratic differentials and the Bernstein problem in Heisenberg space} Transactions of the American Mathematical Society \textbf{361}(11) (2009),5737--5752.


\bibitem {figueroa2007gauss} C. Figueroa, , \textit{On the Gauss map of a minimal surface in the Heisenberg group} Mat. Contemp \textbf{33} (2007), 139--156.

\bibitem {figueroa2012gauss} Christiam Figueroa, \textit{The Gauss map of Minimal graphs in the Heisenberg group} Journal of Geometry and Symmetry in Physics \textbf{25} (2012), 1--21.

\bibitem {inoguchi2005flat} Jun-ichi Inoguchi, \textit{Flat translation invariant surfaces in the 3-dimensional Heisenberg group} Journal of Geometry \textbf{82}(1-2) (2005), 83--90.

\bibitem {ripoll1991hypersurfaces} Jaime B Ripoll, \textit{On hypersurfaces of Lie groups}Illinois Journal of Mathematics \textbf{35}(1) (1991), 47--55.

\bibitem {sampson1978some} Joseph H Sampson, \textit{Some properties and applications of harmonic mappings}Annales scientifiques de l'{\'E}cole Normale Sup{\'e}rieure \textbf{11}(2) (1978), 211--228.

\bibitem {hartle2003gravity} James B Hartle, \textit{Gravity: An introduction to Einstein’s general relativity} American Association of Physics Teachers (2003).


\end{thebibliography}
\end{document}